\newtheorem {lemma} {Lemma} [section]
\newtheorem{thm}{Theorem}
\newtheorem{prop}[lemma]{Proposition}
\theoremstyle{remark}
\newcounter{nmdthmcnt}
\newcommand{\beqa}{\begin{eqnarray}}
\newcommand{\beq}{\begin{equation}}
\newcommand{\eeqa}{\end{eqnarray}}
\newcommand{\eeq}{\end{equation}}
\newcommand{\be}{\begin{equation}}
\newcommand{\ee}{\end{equation}}
\newcommand{\lb}[1]{\label{#1}}
\renewcommand{\Ref}[1]{(\ref{#1})}
\newcommand\kk{{\bf k}}
\newcommand\xx{{\bf x}}
\newcommand\yy{{\bf y}}
\newcommand\kf{\hat\kk}
\newcommand\tf{\hat\tT}
\newcommand\xf{\hat\xx}
\newcommand\yf{\hat\yy}
\newcommand\kv{\kk}
\newcommand\tv{\tT}
\newcommand\xv{\xx}
\newcommand\yv{\yy}
\newcommand\n{\nabla}
\newcommand{\HH}{\mathcal{H}}
\newcommand{\VV}{\mathcal{V}}
\newcommand{\UU}{\mathcal{U}}
\newcommand{\ta}{\tau}
\newcommand\om{\omega}
\newcommand\tT{{\bf t}}
\newcommand{\al}{\alpha}
\newcommand{\bet}{\beta}
\newcommand{\we}{\wedge}
\newcommand{\sig}{\sigma}
\newcommand{\del}{\delta}
\newcommand{\lam}{\lambda}
\newcommand{\fr}{\frac}
\begin{document}
\title[]{Cohomogeneity one central K\"ahler metrics in dimension four}
\author[]{Thalia Jeffres and Gideon Maschler}
\address{Wichita State University, Wichita, KS}
\email{jeffres@math.wichita.edu}
\address{Department of Mathematics and Computer Science\\ Clark University\\ Worcester, MA }
\email{gmaschler@clarku.edu}

\maketitle
\thispagestyle{empty}
\begin{abstract}
A K\"ahler metric is called central if the determinant of its Ricci endomorphism
is constant \cite{m}. For the case in which this constant is zero, we study on $4$-manifolds
the existence of complete metrics of this type which are cohomogeneity one for three unimodular
$3$-dimensional Lie groups: $SU(2)$, the group of Euclidean plane motions $E(2)$
and a quotient by a discrete subgroup of the Heisenberg group $\mathrm{nil}_3$. We obtain
a complete classification for $SU(2)$, and some existence results for the other
two groups, in terms of specific solutions of an associated ODE system.
\end{abstract}

\section{Introduction}

In this paper the term central K\"ahler metric refers to a K\"ahler metric
for which the determinant of the Ricci endomorphism is constant. This is a special case of the
metric type called central in \cite{m}. Riemannian and hermitian metrics with constant Ricci determinant were considered earlier, see for example \cite{k, l, b-m}.

On a compact K\"ahler manifold there exists a Futaki-type invariant for central K\"ahler metrics
\cite{f-t}. An associated functional analogous to the K-energy appears in \cite{c-t,s-w,t,r}. If a compact manifold admits a K\"ahler-Einstein metric, it is shown in \cite{m} that a central K\"ahler metric also exists in any K\"ahler class, and an appropriate notion of uniqueness holds for it as well.
If a compact manifold with a definite first Chern class admits a central K\"ahler metric, it also
admits a K\"ahler-Einstein metric. It is, as far as we know, an open question whether in the case
where the first Chern class has no sign, a similar result holds with the conclusion that the manifold
admits a K\"ahler metric with constant Ricci eigenvalues.

On noncompact manifolds the methods for obtaining the above results are unavailable, and existence of complete central K\"ahler metrics does not seem to have been explored. The main purpose of this paper is to demonstrate existence of such metrics which are also invariant under certain cohomogeneity one group actions on $4$-manifolds. For technical reasons our results are limited to central metrics with {\em zero Ricci determinant}, which we call centrally flat, or metrics of zero central curvature. Note
that in the rough classification in \cite{m} of compact complex surfaces admitting central K\"ahler metrics, the most difficult and least understood case is the centrally flat one.

It should be noted that the groups we consider are not always compact.
More specifically, up to a possible quotient by a discrete subgroup, the groups are three of the six
unimodular $3$-dimensional Lie groups. These are (a quotient of) the Heisenberg group
$\mathrm{nil}_3$,  $SU(2)$ and the group of Euclidean plane motions $E(2)$. For the first and second of these, closely related incomplete central metrics appear in \cite[Thm. 1 and sec. 3.5]{a-m2}.

Our methods involve ODE techniques, and are directly inspired by the papers \cite{d-s1,d-s2} of Dancer and Strachan, and the recent articles \cite{a-m2,mr}. In all of these the K\"ahler-Einstein case is
prominent. Another less closely related work is \cite{mr1}, which examines K\"ahler-Ricci solitons
for actions of Heisenberg groups also in higher dimensions. For all the metrics we find,
completeness holds on manifolds admitting a singular orbit, and the smooth extension of the metric
and K\"ahler form to this orbit are shown using the recent systematic approach of Verdiani and Ziller \cite{vz}.

We remark that the need to restrict ourselves to centrally flat metrics is due to the rather unexpected fact that the Center Manifold Theorem applies only in this case to our systems of
ODEs. Throughout the paper we are, of course, only interested in centrally flat metrics which are
not Ricci-flat.

It is interesting to compare our results to those for K\"ahler-Einstein metrics in the above
references. We note first that our results are restricted to metrics which are diagonal in an appropriate coframe containing left invariant $1$-forms for the group. Note that cohomogeneity one K\"ahler-Einstein metrics under $SU(2)$ must be diagonal, but we are not aware of a similar result for central metrics.

For the action of $SU(2)$, we classify the possible cases (Theorem~\ref{thm3}), but our methods yield only complete diagonal centrally flat metrics which are biaxial, meaning that two out of three metric coefficients are equal. In contrast, \cite{d-s1} also find complete triaxial K\"ahler-Einstein metrics (in which the three coefficients are all distinct). Finally, the biaxial centrally flat metrics we find, just like the corresponding K\"ahler-Einstein ones in \cite{d-s1}, can be given in explicit form.

For $E(2)$, we obtain inexplicit triaxial metrics in analogy with the same result in \cite{mr} in the K\"ahler-Einstein case (see Theorem~\ref{thm4}). However, in that article all cases are classified, whereas  for centrally flat metrics, we have to exclude one case from consideration, as we only find for it partial information concerning solutions satisfying a certain analyticity property.

The complete centrally flat metrics under the action of the quotient of $\mathrm{nil}_3$ are explicitly given examples. See Theorem~\ref{thm2}.

In sections \ref{sec:sh} and \ref{sec:construct} and the appendix,  we recall the ansatz of \cite{mr}, based on the notion of shear operators, and adopt it to the case of central metrics.
As in the K\"ahler-Einstein case, this ansatz may include more than just cohomogeneity one examples.  Here we include it mainly to connect with that work, and recall its specialization to the cohomogeneity one case in section \ref{sec:coho}. Our main results are given
in sections \ref{sec:heis}, \ref{sec:SU2} and \ref{sec:Euc}.

\section{Shear and integrability}\lb{sec:sh}

Let $(M,g,J)$ be an almost hermitian $4$-manifold. We fix a local oriented orthonormal frame
denoted
\[
\{e_i\}=\{\kk, \tT=J\kk, \xx, \yy=J\xx\}.
\]
In the frame domain, we have an orthogonal decomposition of the tangent bundle:
\[
\text{$TM=\VV\oplus\HH$, \quad with ${\VV}=\mathrm{span}(\kk,\tT),\quad {\HH}=\mathrm{span}(\xx,\yy)$.}
\]
Let $\UU$ stand for either $\VV$ or $\HH$, and $\pi_{\UU^\perp}:TM\to{\UU^\perp}$ denote the orthogonal projection.
For a vector field $X\in\Gamma(\UU)$, consider the operator $\pi_{\UU^\perp}\circ \n X|_{\UU^\perp}:\Gamma(\UU^\perp)\to\Gamma({\UU^\perp})$, where $\n$ is the Levi-Civita covariant
derivative of $g$. Define the {\em shear operator} of $X$ by
\[
\text{$S_X$\ :=\ trace-free symmetric part of $\pi_{\UU\perp}\circ \n X|_{\UU^\perp}$.}
\]

Recall the condition for integrability of $J$ in terms of shear operators given in \cite{a-m, mr}.
\begin{thm}\lb{Nij0}
Given the above set-up, the almost complex structure $J$ is integrable in the frame domain
if and only if
\begin{align}\lb{Nij}
\mathrm{i})&\ \ J S_{\xx}=S_{\yy} \text{ on ${\VV}$.}\nonumber\\
\mathrm{ii})&\ \ J S_{\kk}=S_{\tT} \text{ on ${\HH}$.}
\end{align}
\end{thm}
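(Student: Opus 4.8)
The plan is to compute the Nijenhuis tensor $N_J$ directly in the given orthonormal frame and extract, from the vanishing of its essential components, exactly conditions i) and ii). Recall that $J$ is integrable precisely when $N_J(X,Y) := [JX,JY] - [X,Y] - J[JX,Y] - J[X,JY]$ vanishes for all vector fields $X,Y$; since $N_J$ is a tensor, it suffices to test it on frame vectors, and by its antisymmetry and the identity $N_J(JX,Y) = -J N_J(X,Y)$ it suffices to evaluate $N_J(\kk,\xx)$, $N_J(\kk,\yy)$ (the pair $N_J(\tT,\xx)$, $N_J(\tT,\yy)$ then follows). So the whole statement reduces to understanding when $N_J(\kk,\xx) = 0$ and $N_J(\kk,\yy)=0$.

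The first step is to rewrite Lie brackets via the Levi-Civita connection, $[X,Y] = \nabla_X Y - \nabla_Y X$, so that $N_J(X,Y)$ becomes a sum of terms of the form $\nabla_\bullet(J\bullet)$ and $J\nabla_\bullet\bullet$; using $\nabla J = 0$ is \emph{not} assumed here (we are still in the almost hermitian setting), so one must keep the covariant derivatives of $J$ explicit, or better, expand everything in terms of the connection coefficients $\Gamma$ of $g$ in the frame $\{\kk,\tT,\xx,\yy\}$. The second step is the bookkeeping: decompose each $\nabla_{e_i} e_j$ into its $\VV$- and $\HH$-components and observe that, when one collects the $\HH$-component of $N_J(\kk,\xx)$ and $N_J(\kk,\yy)$, only the part of $\nabla\kk$ and $\nabla\tT$ acting $\HH\to\HH$ survives in a way not already forced to cancel — and of that part, the trace and the antisymmetric piece drop out by general identities (the trace piece because $J$ commutes with scalars, the skew piece because on an oriented $2$-plane $J$ is itself the rotation generating the skew part), leaving precisely the trace-free symmetric part, i.e. $S_\kk$ and $S_\tT$. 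This is how condition ii), $JS_\kk = S_\tT$ on $\HH$, emerges; symmetrically, projecting onto $\VV$ and using $\nabla\xx,\nabla\yy$ restricted $\VV\to\VV$ yields condition i).

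The main obstacle — and the step I would budget the most care for — is showing that the \emph{other} components of $N_J$, namely the $\VV$-component of $N_J(\kk,\xx)$ type terms and all the "mixed" pieces involving $\nabla\kk|_\HH$ paired against $\nabla\xx|_\VV$, either vanish automatically or are consequences of i)–ii). One expects this to work because of the algebraic symmetries of $N_J$ on an almost hermitian manifold: $g(N_J(X,Y),Z)$ has extra antisymmetry properties (it is, up to a constant, the $(3,0)+(0,3)$ part of $d\omega$-independent torsion data), which forces the $\VV\to\VV$ shear data and the $\HH\to\HH$ shear data to be the only obstructions. Concretely I would invoke the standard fact that for an almost hermitian structure $N_J$ is of type $(0,2)+(2,0)$, so $g(N_J(X,Y),\cdot)$ is determined by its values with all three arguments "holomorphic," and then a short count shows these values are exactly the six independent entries of $S_\kk - J^{-1}S_\tT$ on $\HH$ and $S_\xx - J^{-1}S_\yy$ on $\VV$.

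A cleaner alternative, if one wants to avoid the $(2,0)$-type argument, is to fix the frame so that at a point $\nabla_{e_i}e_j$ has no component along $e_i$'s own plane beyond what torsion forces, reducing the number of $\Gamma$'s to track; but since $N_J$ is tensorial this normalization is legitimate only pointwise and one must be careful that the shear operators $S_X$ are likewise evaluated pointwise. I would therefore carry out the computation fully tensorially: expand $N_J(\kk,\xx)$ and $N_J(\kk,\yy)$ in the frame, collect coefficients, identify the symmetric trace-free $2\times 2$ blocks acting on $\HH$ as $S_\kk,S_\tT$ and those acting on $\VV$ (appearing in $N_J(\xx,\cdot)$) as $S_\xx,S_\yy$, and verify by inspection that "$N_J\equiv 0$" is equivalent to the two matrix identities $JS_\xx = S_\yy$ on $\VV$ and $JS_\kk = S_\tT$ on $\HH$. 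Cross-referencing \cite{a-m,mr} for the precise index conventions will be the final check.
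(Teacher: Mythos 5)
Your strategy is sound, and it is essentially the one behind the result: the paper does not prove Theorem~\ref{Nij0} at all but recalls it from \cite{a-m,mr}, where it is established by exactly the kind of direct frame computation of the Nijenhuis tensor that you describe in your last paragraph. Two comments on streamlining. First, the detour through the Levi-Civita connection (and the attendant worry about tracking $\nabla J$) is unnecessary and only introduces terms you must later cancel: $N_J$ is built from Lie brackets alone, and the shear coefficients \Ref{sh-coef} are already written in terms of Lie brackets, so the efficient route is to expand $N_J(\kk,\xx)=[\tT,\yy]-[\kk,\xx]-J[\tT,\xx]-J[\kk,\yy]$ directly and project onto $\VV$ and $\HH$. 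Second, your ``main obstacle'' dissolves under a dimension count rather than requiring the $(2,0)+(0,2)$-type argument: in dimension four, antisymmetry together with $N_J(JX,Y)=-JN_J(X,Y)$ reduces the whole tensor to the single vector $N_J(\kk,\xx)$ (in particular $N_J(\kk,\yy)=-JN_J(\kk,\xx)$ carries no new information), and its four components split as two along $\HH$ --- which one checks are exactly the entries of $JS_{\kk}-S_{\tT}$ on $\HH$, i.e.\ in the notation of Section~3 the relations \Ref{rels1} --- and two along $\VV$, which are exactly the entries of $JS_{\xx}-S_{\yy}$ on $\VV$. So nothing ``vanishes automatically'' and nothing is a consequence of i)--ii): the four scalar components of $N_J(\kk,\xx)$ are precisely the four scalar equations of the theorem, and the trace and skew parts of the operators $\pi_{\HH}\circ\nabla\kk|_{\HH}$, etc., never enter because the combinations of bracket coefficients appearing in $N_J(\kk,\xx)$ simply do not contain them. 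Carrying out this short computation would complete your proposal.
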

In application we will also rely on the following expression of the matrix corresponding to the
shear operator in a local oriented orthonormal frame $\{v_1,v_2\}$ on $\UU^\perp$.
\[
[S_X]_{v_1,v_2}=
\begin{bmatrix}
        - \sigma_1 & \sigma_2\\
        \sigma_2 & \sigma_1\\
      \end{bmatrix},
\]
with {\em shear coefficients}:
\be\lb{sh-coef}
\begin{aligned}
2\sigma_1\ &:=\
 \ \ g([X,v_1],v_1)-g([X,v_2],v_2),\\
2\sigma_2\ &:=\
 -g([X,v_1],v_2) - g([X,v_2],v_1).
\end{aligned}
\end{equation}

One simple case in which integrability holds by Theorem \ref{Nij0}
is when all the shears vanish: $S_{e_i}=0$, $i=1,\ldots,4$.
We refer to this as the shear-free case.

\section{Shear and K\"ahler metrics}\lb{sec:construct}

We recall here an ansatz for K\"ahler metrics on $4$-manifolds given in \cite{mr}. Let$(M,g, J)$ be an almost hermitian $4$-manifold admitting an orthonormal frame $\{e_i\}=\{\kk, \tT, \xx, \yy\}$,
with $J\kk=\tT$, $J\xx=\yy$, defined over an open $U\subset M$, which satisfies the Lie bracket relations
\begin{align}
&[\kk,\tT]=L(\kk+\tT),\qquad &&[\xx,\yy]=N(\kk+\tT),\lb{brack1}\\
&[\kk,\xx]=A\xx+B\yy,\qquad  &&[\kk,\yy]=C\xx+D\yy,\lb{brack2}\\
&[\tT,\xx]=E\xx+F\yy,\qquad  &&[\tT,\yy]=G\xx+H\yy,\lb{brack3}
\end{align}
for smooth functions $A, B, C, D, E, F, G, H, L, N$ on $U$ such that
\begin{align}
A&-D=F+G,\qquad B+C=H-E,\lb{rels1}\\
N&=A+D=-(E+H).\lb{rels2}
\end{align}
Then $(g,J)$ is K\"ahler (see \cite[Prop. 3.1]{mr}).
Its Levi-Civita connection over  $U$ can be given by setting
\be\lb{conn}
\n_\kk\kk=-L\tT,\qquad  \n_\xx\xx=A\kk+E\tT,\qquad
\n_\xx\kk=-A\xx+E\yy,
\end{equation}
and then having all other covariant derivative expressions on frame fields
determined by the requirement that $\n$ be torsion-free and make $J$ parallel.

The Ricci form of the K\"ahler metric $g$ was shown in \cite{mr} to take the form
\begin{multline}\lb{ric}
\rho=
L(d\hat\kk+d\hat\tT)+(C-H)d\hat\kk+(A-F)d\hat\tT\\
+dL\we(\hat\kk+\hat\tT)+d(C-H)\we\hat\kk+d(A-F)\we\hat\tT.
\end{multline}
where the hatted quantities denote  the dual coframe of $\{e_\ell\}$.

Using formulas \Ref{d-frame} in the appendix for the exterior derivatives of the coframe
$1$-forms, as well as
$df=d_\kk f\,\hat\kk+d_\tT f\,\hat\tT+d_\xx f\,\hat\xx+d_\yy f\,\hat\yy$, valid for a smooth
function $f$ on $M$, we can rewrite this formula in the form
\begin{equation*}
\rho=\al\hat\xx\we\hat\yy+\bet\hat\kk\we\hat\tT+\gamma\hat\kk\we\hat\xx+\del\hat\kk\we\hat\yy+
\phi\hat\tT\we\hat\xx+\psi\hat\tT\we\hat\yy,
\end{equation*}
where
\begin{align}
\al&=-N(2L+C-H+A-F),\nonumber\\
\bet&=-L(2L+C-H+A-F)+d_{\kk-\tT}L-d_\tT(C-H)+d_\kk(A-F),\nonumber\\
\gamma&=-d_\xx(L+C-H),\nonumber\\
\del&=-d_\yy(L+C-H),\nonumber\\
\phi&=-d_\xx(L+A-F),\nonumber\\
\psi&=-d_\yy(L+A-F).\lb{ric-coeff}
\end{align}

The central curvature $c$ is defined by the equation
\[
\rho^{\we 2}=c\,\om^{\we 2} . \]
If $c$ is constant, we write
\( c=\lam,
\) and call the corresponding metric a {\em central metric}.
In terms of the Ricci coefficients \Ref{ric-coeff}, we then have
\be\lb{cent}
c=\al\bet-\gamma\psi+\del\phi=\lam,
\end{equation}
because $\rho^{\we 2}=2c\,\hat\xx\we\hat\yy\we\hat\kk\we\hat\tT$ while
$\om^{\we 2}=2\hat\xx\we\hat\yy\we\hat\kk\we\hat\tT$. Such a central
metric will not be Einstein if either at least one of $\gamma$, $\delta$, $\phi$, $\psi$
is not identically zero or $\al$ and $\bet$ are not both equal to the same constant.


We now recall a function built in to our ansatz that gave rise in \cite{mr} to the independent
variable in a system of ODEs used in both \cite{mr} and \cite{mr1}.

The Lie bracket relations \Ref{brack1}-\Ref{brack3} imply that the distribution spanned by
$\kk+\tT$, $\xx$ and $\yy$ is integrable. Since this distribution is orthogonal to $\kk-\tT$,
while the latter vector field has constant length and is easily seen to have geodesic flow, it
follows that it is locally a gradient (cf. \cite[Cor. 12.33]{onel}). Thus, there exists
a smooth function $\ta$ defined in some open set $V\subset U$, such that
\be\lb{grad}
\kk-\tT=\n\ta.
\end{equation}
Consider now the six functions $P$, $Q$, $R$, $S$, $L$, $N$,
where the last two are as in \Ref{brack1}, and the first four
are given in terms of four of the functions in \Ref{brack2}-\Ref{brack3} by
\begin{align}
P&=(B-C)+(F-G), &&Q=(B-C)-(F-G),\nonumber \\
R&=\sqrt{(B+C)^2+(F+G)^2}, &&S=\tan^{-1}\left(\frac{B+C}{F+G}\right),\lb{chan-var}
\end{align}
where $S$ is only defined on the set $\{F+G\}\ne 0$.

In terms of these variables, it is shown in the appendix that in case $A, B, \ldots H$, $L$ and $N$
are each a composition of a function of $\ta$, the ansatz equations simplify to five ODEs \Ref{cent-onevar}
involving those functions. In particular the ODE giving the central curvature equation
takes the form
\be
-N(2L+N-P/2)[-L(2L+N-P/2)+(2L'+N'-P'/2)]=\lam.\lb{cent-onevar0}
\end{equation}

\section{Cohomogeneity one examples}\lb{sec:coho}
It was shown in \cite{mr} that the ansatz of section \ref{sec:construct} includes as a special case
cohomogeneity one diagonal K\"ahler metrics under the action of a unimodular group in dimension three. In this section we review their construction, and derive the central metric equation for such metrics.

Assume that $(M,g)$ is a $4$-dimensional Riemannian manifold admitting a proper isometric action by a three dimensional Lie group $\mathcal{G}$ with cohomogeneity one having a discrete isotropy group.
Assuming also that $\mathcal{G}$ is a unimodular group, we choose a frame of left-invariant vector fields
$X_1$, $X_2$, $X_3$, and dual coframe consisting of left-invariant $1$-forms $\sig_1$, $\sig_2$, $\sig_3$. These satisfy
\begin{align}
[X_1,X_2] &= -p_3 X_3,   &&d\sigma_1 = p_1\sigma_2\wedge\sigma_3,\nonumber \\
[X_2,X_3] &= -p_1 X_1,   &&d\sigma_2 = p_2\sigma_3\wedge\sigma_1,\nonumber\\
[X_3,X_1] &= -p_2 X_2.   &&d\sigma_3 = p_3\sigma_1\wedge\sigma_2.\lb{X123}
\end{align}
for some constants $p_1$, $p_2$, $p_3$.
Cohomogeneity one metrics for such groups are also described as having Bianchi type A.
A diagonal such metrics takes the form
\begin{equation}\label{bianchiAmet} g = (abc)^2dt^2+a^2\sigma_1^2+b^2\sigma_2^2+c^2\sigma_3^2, \end{equation}
for functions $a$, $b$, $c$ of $t$. We note that, of course considering the orthogonal frame
$\partial_t,X_1,X_2,X_3$ dual to $dt,\sigma_1,\sigma_2,\sigma_3$, on $M$,
$\partial_t$ commutes, of course, with all $X_i$,
$i=1,2,3$.

Following Dancer and Strachan \cite{d-s1}, denoting $w_1=bc$, $w_2=ac$, and $w_3=ab$, we
define functions $\alpha$, $\beta$, and $\gamma$ so that
\begin{align}
w_1'&=p_1w_2w_3+\alpha w_1, \\
w_2'&=p_2w_1w_3+\beta w_2, \\
w_3'&=p_3w_1w_2+\gamma w_3.
\end{align}
They show that (modulo reordering the frame vectors) the only K\"ahler structures $(M,g,J)$ with $g$ of the form \Ref{bianchiAmet} have complex structure determined by
\begin{equation}\label{complexJ} J\partial_t = abX_3 \quad\text{and}\quad JX_1=\frac{a}{b}X_2, \end{equation}
and $\alpha$, $\beta$, and $\gamma$ satisfy
\[ \alpha=\beta \quad \text{and}\quad \gamma=0.\]
The K\"ahler form is then given by
\begin{equation}\label{kahlerform} \omega = abc^2dt\wedge\sigma_3+ab\sigma_1\wedge\sigma_2 = w_1w_2dt\wedge\sigma_3+w_3\sigma_1\wedge\sigma_2, \end{equation}
and $w_1,w_2,w_3$ satisfy
\begin{align}
w_1'&=p_1w_2w_3+\alpha w_1,\nonumber \\
w_2'&=p_2w_1w_3+\alpha w_2,\nonumber \\
w_3'&=p_3w_1w_2.\lb{w33}
\end{align}
This, in terms of $a,b,c$ implies
\begin{align}
2a'/a&=-p_1a^2+p_2b^2+p_3c^2,\lb{K1} \\
2b'/b&=p_1a^2-p_2b^2+p_3c^2, \lb{K2}\\
2c'/c&=p_1a^2+p_2b^2-p_3c^2+2\alpha.\lb{KKE}
\end{align}



We recall the prescription that makes this model fit with the ansatz of Section $3$.
The orthonormal frame and dual coframe are given  by
\begin{align*}
\mathbf{k} &= \frac{\sqrt{2}}{2}\left(\frac{1}{c}X_3+\frac{1}{abc}\partial_t \right), & \hat{\mathbf{k}} &= \frac{\sqrt{2}}{2}(c\sigma_3+abcdt), \\
\mathbf{t} &= \frac{\sqrt{2}}{2}\left(\frac{1}{c}X_3-\frac{1}{abc}\partial_t \right), & \hat{\mathbf{t}} &= \frac{\sqrt{2}}{2}(c\sigma_3-abcdt), \\
\mathbf{x} &= \frac{X_1}{a}, & \hat{\mathbf{x}} &= a\sigma_1, \\
\mathbf{y} &= \frac{X_2}{b}, & \hat{\mathbf{y}} &= b\sigma_2.
\end{align*}
One can easily check that relations \Ref{brack1}-\Ref{brack3} hold with these choices.

Next the functions of the ansatz are given in terms of $a$, $b$, $c$, by
\begin{align*}
A&=-E=-\frac{a'}{\sqrt{2}a^2bc}=-\frac{1}{a}\frac{da}{d\tau}, & B &=F= -\frac{bp_2}{\sqrt{2}ac}, \\
D&=-H=-\frac{b'}{\sqrt{2}ab^2c}=-\frac{1}{b}\frac{db}{d\tau}, & C&=G=\frac{ap_1}{\sqrt{2}bc}, \\
L&=-\frac{c'}{\sqrt{2}abc^2}=-\frac{1}{c}\frac{dc}{d\tau}, & N &= -\frac{cp_3}{\sqrt{2}ab}.
\end{align*}
Here the prime denotes differentiation with respect to $t$, while the expressions in terms of
$d/d\ta$ hold due to the relation between $\ta$ and $t$ given by
\[ \hat{\mathbf{k}}-\hat{\mathbf{t}}=d\tau=\sqrt{2}abcdt, \qquad\frac{d}{d\tau}=\frac{1}{\sqrt{2}abc}\frac{d}{dt}. \]

Finally, we give the functions $P,Q,R,S$ of the change of
variables \Ref{chan-var}.
\begin{align*}
P&=-\sqrt{2}\frac{a^2p_1+b^2p_2}{abc}, & Q &= 0, \\
R&=\frac{a^2p_1-b^2p_2}{abc}, & S &= \frac{\pi}{4}.
\end{align*}

From the point of view of the ansatz,  the four relations in \Ref{rels1}-\Ref{rels2} that imply the K\"ahler condition
impose only two additional relations here, say $A+D=N$ and $B+C=H-E$,
giving
\begin{align}
\fr{a'}a+\fr{b'}b&=p_3c^2,\lb{K3}\\
\fr{b'}b-\fr{a'}a&=p_1a^2-p_2b^2\lb{another}
\end{align}
which are equivalent to \Ref{K1}-\Ref{K2}. Our remaining task is to determine how the
condition that the metric is central constrain $\al$ in \Ref{KKE}.

The central metric equation \Ref{cent}, is given in the
variables \Ref{chan-var} by \Ref{cent-onevar0}:
\[
-N(2L+N-P/2)[-L(2L+N-P/2)+\fr d{d\ta}(2L+N-P/2)]=\lam.
\]
Calculating using the above formulas for $L$, $N$, $P$  and also \Ref{KKE}, we have
\begin{align*}
2L+N-P/2&=\fr 1{\sqrt{2}abc}\Big(-2\fr{c'}c-p_3c^2+p_1a^2+p_2b^2\Big)\\
&=\fr 1{\sqrt{2}abc}(-2\al).
\end{align*}
So that \Ref{cent-onevar0} takes the form
\begin{align*}
\fr{p_3c}{\sqrt{2}ab}\Big(\fr{-2\al}{\sqrt{2}abc}\Big)
\Big[\fr{c'}{\sqrt{2}abc^2}\fr{-2\al}{\sqrt{2}abc}+\fr 1{\sqrt{2}abc}\Big(\fr{-2\al}{\sqrt{2}abc}\Big)' \,\Big]=\lam.
\end{align*}
A relatively straightforward simplification of this which also uses \Ref{K3}
yields, the equivalent form
\be\lb{alpha}
p_3(\al^2)'=2c^2\Big(\lam(ab)^4+p_3^2\al^2\Big)
\end{equation}
This equation, together with \Ref{K1}-\Ref{KKE} constitutes the ODE system for diagonal
Bianchi IX central metrics.

Note that setting $p_3=0$ (hence also $N=0$) forces $\lam=0$ but no other constraints.
On the other hand, setting $\lam=0$ yields, for $p_3\ne 0$ the equation
\[
\al'=p_3c^2\al
\]
which will play a major role in the following sections.

Additionally, one can check that the formula $\al=\mp(\sqrt{\lam}/p_3)(ab)^2$, $\lam>0$ reduces
 \Ref{alpha} to an identity, and this corresponds to the fact that this is the K\"ahler-Einstein
condition (for $p_3\ne 0$, see \cite{mr}). On the other hand, a K\"ahler-Einstein metric with
$p_3=0$ must be Ricci flat ($\lam=0$) and necessarily $\al'=0$. But note in general from \Ref{ric-coeff} that $\al=0$ is necessarily a Ricci flat case, so such solutions will not concern us.

\section{cohomogeneity one central flat metric under a Heisenberg group quotient action.}\lb{sec:heis}

\subsection{The equations}
On the Heisenberg group, with $p_1=0$, $p_2=0$ and $p_3=1$,
equations \Ref{K1}-\Ref{KKE} and \Ref{alpha} take the form
\begin{align}
2\frac{a'}a&=c^2,\lb{K1H}\\
2\frac{b'}b&=c^2,\nonumber\\
2\frac{c'}c&=-c^2+2\al,\lb{KKEH}\\
\al'&=c^2(\al+\lam(ab)^4/\al).\lb{KKEH1}
\end{align}
Since $(a/b)'=0$ is a first integral, $b$ is a constant multiple of $a$, so that potential
metrics are so-called biaxial. From now on we assume this constant is equal to $1$.
Additionally, we adopt the form used in \cite{a-m2,mr1} by making the change of variables
$a^2\,dt=dq$. Then, setting $\phi(q):=a^2$, we see from \Ref{K1H} that
\[
\phi'(q)=2a\fr {da}{dq}=2a\fr {da}{dt}\fr{dt}{dq}=2a\fr {da}{dt}\fr{1}{a^2}=c^2.
\]
It follows that the metric takes the form
\be\lb{g-phi}
g=\phi(q)(\sig_1^2+\sig_2^2)+\phi'(q)(\sig_3^2+dq^2)
\end{equation}
with K\"ahler form
\[
\om=d(\phi(q)\sig_3).
\]
We now use a prime exclusively for the derivative with respect to $q$, while $\al$ will be considered,
depending on the context, as a function of $t$ or a function of $q$.
The two equations \Ref{KKEH}-\Ref{KKEH1} then translate as follows
\begin{align*}
\fr{\phi''(q)}{\phi'(q)}&=2c\fr{dc}{dt}\fr{dt}{dq}\fr 1{c^2}=\left(-c^2+2\al\right)\fr 1{a^2}=
-\fr{\phi'(q)}{\phi(q)}+2\fr{\al}{\phi(q)},\\
\al'(q)&=\fr{d\al}{dt}\fr{dt}{dq}=c^2\Big(\al+\lam\fr{a^8}{\al}\Big)\fr 1{a^2}
=\fr{\phi'(q)}{\phi(q)}\Big(\al+\lam\fr{\phi^4}{\al}\Big).
\end{align*}
Or, simplified
\begin{align}
\fr{(\phi^2)''}{(\phi^2)'}&=2\fr{\al}{\phi},\lb{Heis1}\\
\al'&=\fr{\phi'}{\phi}\Big(\al+\lam\fr{\phi^4}{\al}\Big).\lb{Heis2}
\end{align}

We now set $\lam=0$. Then \Ref{Heis2} implies (if $\al$ is nonzero) that $\al/\phi$ is constant,
which again we choose to be $1$. Substituting this into \Ref{Heis1} gives an
equation with explicit solution
\[
\text{$\phi=C\sqrt{e^{2q}+B}$, \qquad $C>0$, $B$ constants.}
\]
For simplicity we choose $C=1$ and $B=c_1^2$, $c_1>0$.
Then for $q$ real valued, $\phi$ takes values in $(c_1,\infty)$, and
\[
\phi'=\fr{e^{2q}}{(e^{2q}+c_1^2)^{1/2}}.
\]
We show that $g$ is complete, in the next few subsections. Here we point out
that $g$ is not Ricci flat. In fact as $p_3$ is nonzero, the formula near
the end of section \ref{sec:coho} shows that for Ricci flatness we must have $\al=0$,
but in this solution $\al=\phi\ne 0$.

\subsection{Setup}

As in  \cite{a-m2}, in order to avail ourselves of the methods of \cite{vz},
we consider a cohomogeneity one action under the quotient
$\mathcal{G}=\mathrm{nil_3}/\tilde{\mathbb{Z}}$ of the Heisenberg group
by the infinite cyclic group lying in its center, and given by
\[ \tilde{\mathbb{Z}}:= \left\{\begin{bmatrix} 1 & 0 & 2\pi n\\ 0 & 1 & 0\\ 0 & 0 & 1  \end{bmatrix} \Big|\ n\in\mathbb{Z} \right\}.\]
$\mathcal{G}$ has center $K$ isomorphic to
$SO(2)$, whose transitive action on the circle $S^1$ extends to a linear action on $V:=\mathbb{R}^2$. We consider the homogeneous vector bundle $M=\mathcal{G}\times_K V$ (in which points of the product
are identified according to $(g,v)\sim(gk^{-1},kv)$ for $k\in K$).
$\mathcal{G}$ acts on $M$ by left multiplication on the first factor.
The action of $\mathcal{G}$ has trivial isotropy at points of a regular orbit, but isotropy $K$ at a
point of the singular orbit $\mathcal{G}/K\approx \mathbb{R}^2$.

\subsection{Length of an escaping curve}

We now choose a left-invariant frame for $\mathcal{G}$ which is given
in coordinates $x$, $y$, $z$ by $X_1=\partial_x$, $X_2=\partial_y+x\partial_z$,
$X_3=-\partial_z$, to which we will add on $M$ the vector field $\partial_q$.
Note that the domain of this coordinate system is open and dense in $M$,
and $z$ is bounded due to the fact that we are considering a quotient.

The corresponding coframe consists of $dq$ and the left invariant coframe for the group
given by $\sig_3=xdy-dz$, $\sig_1=dx$, $\sig_2=dy$.
Given a curve $\gamma(s):I\to M$ of finite length $L(\gamma)$, with coordinate presentation
$(x(s),y(s),z(s),q(s))$, we have
\begin{align*}
\gamma'&=x'\partial_x+y'\partial_y+z'\partial_z+q'\partial_q\\
&=\sig_1(\gamma')X_1+\sig_2(\gamma')X_2+\sig_3(\gamma')X_3+dq(\gamma')\partial_q
\end{align*}
so that
\begin{align*}
g\Big(\gamma',\fr{X_1}{|X_1|}\Big)=x'\sqrt{\phi},\qquad
g\Big(\gamma',\fr{X_2}{|X_2|}\Big)=y'\sqrt{\phi},\qquad
g\Big(\gamma',\fr{\partial_q}{|\partial_q|}\Big)=q'\sqrt{\phi'}.
\end{align*}
It follows that the length of $\gamma$ satisfies the Cauchy-Schwarz estimates
\begin{align}
L(\gamma)=\int_I|\gamma'(s)|\,ds&\ge\inf_I(\sqrt{\phi(q)})\Big|\!\int_I x'\,ds\Big|,\lb{x-prime}\\
L(\gamma)&\ge\inf_I(\sqrt{\phi(q)}) \Big|\!\int_I y'\,ds\Big|,\lb{y-prime}\\
L(\gamma)&\ge\Big|\int_I\sqrt{\phi'(q)}q'\,ds\Big|.\lb{q-prime}
\end{align}
Now the right hand side of \Ref{q-prime} equals
\[
\Big|\int_{q(I)}\sqrt{\phi'(q)}\,dq\Big|.
\]
If $q(I)$ has $q=\infty$ as an endpoint, this integral is infinite, so it follows
that this cannot occur if $\gamma$ has finite length. On the other hand
the infima in \Ref{x-prime}-\Ref{y-prime} are positive since this holds for any $q\in\mathbb{R}$.
It then follows from these two equations that for a finite length curve, $x$ and $y$
are bounded. Thus such a curve can only leave every compact set in $M$ if a sequence of
its $q$ values approach $-\infty$. To address this problem we have to attach a ``bolt"
to $M$ at $q=-\infty$,  that is, a singular orbit for the group action, and see that the metric and
K\"ahler form extend smoothly to it.

\subsection{Attaching a bolt}

As $\phi'(q)(dq^2+\sig_3^2)=\frac{d\phi^2}{\phi'(q)}+\phi'(q)\sig_3^2$,
and $\phi'=(\phi^2-c_1^2)/\phi$, the metric
$g$ can be written in the form
\begin{align*}
g&=\fr{\phi}{\phi^2-c_1^2}d\phi^2
+\fr{\phi^2-c_1^2}{\phi}\sig_3^2
+\phi(\sig_1^2+\sig_2^2)\\
\end{align*}
defined on the domain $\phi\in (c_1,\infty)$.

To apply the Verdiani-Ziller smoothness conditions \cite{vz} for a metric at a singular orbit.
We write the metric near $c_1$ in the for $dr^2+h_r$ where $r=0$ corresponds to $\phi=c_1$.
Note that converting equations \Ref{K1H}-\Ref{KKEH1} into this form
amounts to dividing their right hand side by $abc$. From this one can see that a solution can be
extended smoothly if $a$, $b$, $\al$ are even in $r$ and $c$ is odd in $r$. this mean that $\phi=a^2$ and $\phi'=c^2$ are even as functions of $r$.

Computing asymptotically near $c_1$, we have
$dr=\sqrt{\frac{\phi}{\phi^2-c_1^2}}d\phi\approx \sqrt{\frac{c_1}{(\phi-c_1)2c_1}}d\phi$ so
that $r\approx\sqrt{2(\phi-c_1)}$.
Thus near $c_1$
\[
g\approx dr^2+\frac{r^4/4+r^2c_1}{r^2/2+c_1}\sig_3^2+(r^2/2+c_1)(\sig_1^2+\sig_2^2).
\]

We compare this with the smoothness conditions in \cite{vz}, which in our case, for $\mathfrak{m}=\mathrm{span}(X_1,X_2)$, $\mathfrak{p}=\mathrm{span}(X_3)$,
are, near $r=0$,
\begin{align*}
&\text{$g(\mathfrak{m},\mathfrak{m})$ is even in $r$,}\\
&\text{$g(\mathfrak{p},\mathfrak{m})=r^2\psi(r^2)$,}\\
&\text{$g(X,X)=\bar a^2r^2+r^4\xi(r^2)$ for $X\in\mathfrak{p}$.} 
\end{align*}
Only the last condition is not automatic in our case, and in it, $\bar a$ denotes the cardinality of the intersection of the (trivial) stabilizer with $\{\exp(\theta X)\,|\,0\le\theta\le2\pi\}$, with $X$ normalized so that the latter set is a closed one-parameter subgroup.
For the group $\mathcal{G}$ and $X=-X_3$ we have $\bar a=1$ and it is thus sufficient to check
the form of $g(X,X)$ for this $X$. The coefficient of $\sig_3^2$ is
\[ \frac{r^4/4+r^2c_1}{r^2/2+c_1}=r^2-\fr1{4c_1}r^4+O(r^6). \]
Thus the conditions for smoothness of the metric are verified.

The K\"ahler form similarly extends smoothly to the singular fiber. In fact, it is
\begin{multline*} d\phi\we\sig_3+ \phi(\sig_1\we\sig_2)=
d\left[\left(\sqrt{\phi-c_1}\right)^2\right]\we\sig_3+ \phi(\sig_1\we\sig_2)\\
\approx 2^{-1}d(r^2)\we\sig_3+ (r^2/2+c_1)(\hat\xx\we\hat\yy),
\end{multline*}
whereas modifying the conditions in \cite{vz} so that they apply to a $2$-form, shows that
in our case smoothness requires that near $r=0$ the coefficient of $dr\we\sig_3$ has
the form $r\psi(r^2)$ and the coefficient of $\sig_1\we\sig_2$ is even. The fact that $\phi$
is even and the above form conclude the proof. We thus showed
\begin{thm}\lb{thm2}
For every $c_1>0$ the metric
\[
g=\fr{e^{2q}}{\sqrt{e^{2q}+c_1^2}}(dq^2+\sig_3^2)+\sqrt{e^{2q}+c_1^2}(\sig_1^2+\sig_2^2),\qquad q\in\mathbb{R},
\]
defined on the $\mathcal{G}\times_{SO(2)}\mathbb{R}^2$, with $\mathcal{G}\simeq\mathrm{nil}_3/\tilde{\mathbb{Z}}$,
is complete and centrally flat.
\end{thm}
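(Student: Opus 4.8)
The plan is to verify the two assertions of the theorem---completeness and central flatness---separately, since almost all of the analytic work has already been carried out in the preceding subsections; what remains is to assemble it. First I would record that the displayed metric is exactly $g=\phi(q)(\sigma_1^2+\sigma_2^2)+\phi'(q)(\sigma_3^2+dq^2)$ with $\phi(q)=\sqrt{e^{2q}+c_1^2}$, which is the solution found after setting $\lambda=0$ and $\alpha/\phi\equiv 1$ in equations \Ref{Heis1}--\Ref{Heis2}. Because this $\phi$ solves the reduced ODE system, $(g,J)$ with the complex structure \Ref{complexJ} is K\"ahler and central with $\lambda=0$; the non-Ricci-flatness is the observation already made at the end of Section~\ref{sec:heis}, namely that $p_3=1\ne 0$ forces $\alpha=0$ for Ricci-flatness, whereas here $\alpha=\phi\ne 0$.

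For completeness I would argue in two stages, exactly as set up in the subsections ``Length of an escaping curve'' and ``Attaching a bolt.'' Stage one: on the dense coordinate chart, the Cauchy--Schwarz estimates \Ref{x-prime}--\Ref{q-prime} show that any finite-length curve has bounded $x$, $y$ (since $\inf_{q\in\mathbb{R}}\sqrt{\phi(q)}=\sqrt{c_1}>0$) and cannot have $q\to+\infty$ (since $\int^\infty\sqrt{\phi'(q)}\,dq=\infty$); also $z$ is bounded because we passed to the $\mathrm{nil}_3/\tilde{\mathbb{Z}}$ quotient. Hence the only way a curve can escape every compact set is $q\to-\infty$, i.e. $\phi\to c_1^+$. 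Stage two: change the radial variable to $r$ with $r\approx\sqrt{2(\phi-c_1)}$, so $\phi=c_1$ corresponds to $r=0$ and the metric takes the form $dr^2+h_r$; then the Verdiani--Ziller smoothness conditions \cite{vz} at the singular orbit $\mathcal{G}/K\cong\mathbb{R}^2$ are checked term by term. The conditions on $g(\mathfrak{m},\mathfrak{m})$ and $g(\mathfrak{p},\mathfrak{m})$ hold automatically (the latter is identically zero, the former even in $r$ since $\phi$ is even in $r$), and the only nontrivial one, $g(X,X)=\bar a^2 r^2+r^4\xi(r^2)$ for $X=-X_3$ with $\bar a=1$, follows from the expansion $\frac{r^4/4+r^2c_1}{r^2/2+c_1}=r^2-\frac{1}{4c_1}r^4+O(r^6)$. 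The analogous (2-form) version of the conditions then gives smooth extension of $\omega=d(\phi\,\sigma_3)$ to the bolt. Once the metric and K\"ahler form extend smoothly to the singular orbit, the manifold-with-bolt $\mathcal{G}\times_{SO(2)}\mathbb{R}^2$ is the desired complete space: every finite-length curve now stays in a compact set, so $g$ is complete by the usual metric-space argument.

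I expect the main obstacle to be purely bookkeeping rather than conceptual: one must confirm that the asymptotic substitution $r\approx\sqrt{2(\phi-c_1)}$ is compatible with the parity requirements demanded by \cite{vz}---that $a$, $b$, $\alpha$ are even and $c$ odd in $r$---and that passing to the quotient by $\tilde{\mathbb{Z}}$ indeed produces a circle isotropy $K\cong SO(2)$ with $\bar a=1$, so that the Verdiani--Ziller criterion is being applied with the correct normalization of the one-parameter subgroup $\{\exp(\theta X)\}$. Everything else is a direct transcription of the computations already displayed, so the proof is essentially a matter of stating ``collecting the above, we are done.''
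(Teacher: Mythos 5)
Your proposal follows the paper's own argument essentially step for step: central flatness from the explicit solution of \Ref{Heis1}--\Ref{Heis2} with $\alpha=\phi$, the Cauchy--Schwarz estimates \Ref{x-prime}--\Ref{q-prime} to show escape forces $q\to-\infty$, and the Verdiani--Ziller check in the variable $r\approx\sqrt{2(\phi-c_1)}$ for both the metric and the K\"ahler form at the bolt. This is correct and matches the paper's proof.
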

We note that it is not too difficult to classify all complete diagonal cohomogeneity one
centrally flat metrics under the action of $\mathcal{G}$. We demonstrate how to carry this
out in a more difficult case in the next section.


\section{Centrally flat metric under the action of the  compact group $SU(2).$ }\lb{sec:SU2}

\subsection{The equations}
We now consider the case where the action is by the compact group \( SU(2) \) for a metric with central curvature $\lambda=0$. With the choices \( p_{1} = p_{2} = p_{3} =1, \) the system \Ref{K1}-\Ref{KKE},\Ref{alpha}
becomes

\begin{eqnarray*}
 a' & = & \frac{a}{2} (-a^{2} +b^{2} +c^{2} ) \\
 b' & = & \frac{b}{2} (a^{2} -b^{2} +c^{2} ) \\
 c' & = & \frac{c}{2} (a^{2} + b^{2} -c^{2} +2\alpha ) \\
 \alpha ' & = & c^{2} \alpha .
\end{eqnarray*}
Note that we must take $\lam=0$ to ensure that the right hand side of the system is smooth,
which allows us to employ the center manifold theorem.

We observe immediately that this can be reduced to a system of three equations in three unknown
functions.
From the first and second equations, it follows that
\[ \frac{d}{dt} (ab) = abc^{2} .\]
Combining this with the fourth equation, we have
\[ \frac{d}{dt} \log \frac{ab}{\alpha } = 0 ,\]
and so there exists a constant $A$ such that
\[ \log \frac{ab}{\alpha } = A, \]
and so \( \alpha = e^{-A} (ab). \) With this, the fourth equation can be eliminated, and the
third equation rewritten. The system becomes

\begin{align}
 a' & =  \frac{a}{2} (-a^{2} + b^{2} + c^{2} ) \nonumber \\
 b' & =  \frac{b}{2} (a^{2} -b^{2} + c^{2} ) \nonumber \\
 c' & =  \frac{c}{2} (a^{2} + b^{2} + 2e^{-A} ab -c^{2} ).\lb{SU2-ode}
\end{align}


By a uniqueness argument, if an analytic solution defined on a maximal interval has an
initial value in the region
\[ \mathcal{R} = \{ (a,b,c ) \in \mathbb{R} ^{3} \mid a, b, c >0 \} , \]
then the trajectory will remain in \( \mathcal{R} \) for all values of $t$ for which the solution
exists. From now on we only consider such solutions, for which the metric will defined for
values of $t$ in this interval.

\subsection{Linearization at equilibrium solutions and preliminary calculations}

Equilibrium solutions that lie in \( \overline{\mathcal{R} }  \) are \( (q, q,0), \, (0,q, q), \)
and \( (q, 0,q), \) for \( q\geq 0. \) The coefficient matrix of the linearized system at
\( (q, q,0) \) has eigenvalues \( 0, -2q^{2} , q^{2} (1+e^{-A} ). \) At \( (0,q,q), \) the
eigenvalues are \( q^{2} , 0, -2q^{2}, \) and at \( (q,0,q), \) the eigenvalues are \( 0, q^{2} -2q^{2} . \) In all cases, if \( q>0 \) one of these eigenvalues is positive. The Center Manifold
Theorem guarantees that near an equilibrium solution for which the linearized system
has a positive eigenvalue with no multiplicity, the system admits an unstable
curve.

Next, one verifies
\begin{lemma} For any solution to \Ref{SU2-ode}, we have
\begin{eqnarray*}
 \frac{d}{dt} (ab) & = & (ab) c^{2} \\
 \frac{d}{dt} (ac) & = & ac (b^{2} + e^{-A} ab) \\
 \frac{d}{dt} (bc) & = & bc (a^{2} + e^{-A} ab) \\
 \frac{d}{dt} \Big(\frac{a}{b} \Big) & = & \frac{a}{b} (-a^{2} +b^{2} ) \\
 \frac{d}{dt} \Big(\frac{a}{c} \Big) & = & \frac{a}{c} (-a^{2} - e^{-A} ab +c^{2} )   \\
 \frac{d}{dt} (a^{2} -b^{2} ) & = & (a^{2} -b^{2} ) (-a^{2} -b^{2} +c^{2} ) \\
 \frac{d}{dt} (a^{2} -c^{2} ) & = & -(a^{2} -c^{2} ) (a^{2} -b^{2} +c^{2} ) - 2e^{-A} abc^{2} .
\end{eqnarray*}

\end{lemma}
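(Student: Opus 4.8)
The plan is a direct verification: each of the seven identities follows by differentiating the indicated product, quotient, or difference and substituting the three equations \Ref{SU2-ode}. Since every right-hand side of \Ref{SU2-ode} has the shape (a metric coefficient) times one half of a quadratic polynomial, I would first record the logarithmic derivatives $2a'/a=-a^2+b^2+c^2$, $2b'/b=a^2-b^2+c^2$, $2c'/c=a^2+b^2+2e^{-A}ab-c^2$, together with the consequences $(a^2)'=a^2\cdot(2a'/a)$, and likewise for $(b^2)'$ and $(c^2)'$; with these in hand the bookkeeping is almost mechanical.

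For the first five lines I would use the elementary rules $(fg)'/(fg)=f'/f+g'/g$ and $(f/g)'/(f/g)=f'/f-g'/g$. Then $(ab)'/(ab)$ is half the sum of $2a'/a$ and $2b'/b$, which collapses to $c^2$ --- this is precisely the identity already isolated above in reducing the system; $(ac)'/(ac)$ and $(bc)'/(bc)$ are the half-sums with $2c'/c$, giving $b^2+e^{-A}ab$ and $a^2+e^{-A}ab$ respectively; and $(a/b)'/(a/b)$, $(a/c)'/(a/c)$ are the corresponding half-differences, producing $-a^2+b^2$ and $-a^2-e^{-A}ab+c^2$.

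For the last two lines I would write $(a^2-b^2)'=a^2(2a'/a)-b^2(2b'/b)$ and $(a^2-c^2)'=a^2(2a'/a)-c^2(2c'/c)$, expand, and then reassemble: in the first case $-a^4+b^4=-(a^2-b^2)(a^2+b^2)$ combines with the leftover $(a^2-b^2)c^2$ to give $(a^2-b^2)(-a^2-b^2+c^2)$; in the second case every term except $-2e^{-A}abc^2$ groups into $-(a^2-c^2)(a^2-b^2+c^2)$, yielding the stated formula.

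There is no genuine obstacle here --- the whole lemma is routine algebra --- so the only ``difficulty'' is organizational: keeping the $e^{-A}$ terms in their places and choosing the factorizations above so that the expressions visibly close up. These identities are recorded mainly because several of them, especially the monotonicity-type equations for $ab$, $a^2-b^2$, and $a^2-c^2$, will be used repeatedly in the completeness and classification arguments in the remainder of this section.
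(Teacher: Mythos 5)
Your proposal is correct and matches the paper, which simply states that these identities are straightforward consequences of the system \Ref{SU2-ode}; your explicit use of logarithmic derivatives and the indicated factorizations is exactly the intended direct verification.
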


These are straightforward consequences of the equations of the system.

An immediate implication of these calculations is that in $\mathcal{R}$, the products \( ab, \, ac, \) and \( bc\)
are increasing functions. It follows that at any value of $t,$ at most one of the three can be
decreasing, and also that all three products have finite, non-negative limits as $t$ approaches
the lower endpoint of the maximal interval on which a solution exists. As a further implication, from
the equation for \( d/dt (a/b), \) we see by uniqueness that either $a$ is identically equal to
$b$ or never equal to $b.$ Since the roles of $a$ and $b$ are interchangeable, we may therefore
assume that if $a$ and $b$ do not coincide, that it is $a$ that is greater. Finally, this makes
$b$ a strictly increasing function.

Now choose an initial value in the region \( \mathcal{R} .\) Local existence theory provides for
existence to the initial value problem on a non-empty interval; let \( (\xi , \eta ) \) be the
maximal interval of existence for a given initial value. We investigate whether there are
trajectories which correspond to complete metrics.

\subsection{A maximal solution interval bounded from below}

We will first discover that any candidates for complete metrics correspond to trajectories for
which \( \xi = -\infty .\) This explains the attention paid earlier to the unstable curves, for they are
such trajectories. We then investigate whether any of these do in fact give rise to complete
metrics.

\begin{prop} Trajectories for which \( \xi > -\infty \) correspond to incomplete metrics.

\end{prop}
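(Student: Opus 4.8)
The plan is to establish incompleteness by producing, for each trajectory with $\xi>-\infty$, a curve of finite $g$-length that eventually leaves every compact subset of the manifold carrying the metric. At this stage that manifold is the tube $M=N\times(\xi,\eta)$ over a principal orbit $N$, so the natural candidate is a segment of a normal geodesic: fix $t_0\in(\xi,\eta)$ and a point $g_0\in N$, and take $\gamma\colon(\xi,t_0]\to M$, $\gamma(t)=(g_0,t)$, in the coordinates of \eqref{bianchiAmet}.

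The one genuine computation is the length of $\gamma$. Since $\sigma_1,\sigma_2,\sigma_3$ annihilate $\gamma'=\partial_t$, formula \eqref{bianchiAmet} gives $g(\gamma',\gamma')=(abc)^2$, so $L(\gamma)=\int_{\xi}^{t_0}abc\,dt$. As already observed, the products $ab$, $ac$, $bc$ are increasing on $\mathcal{R}$ (this is immediate from the preceding lemma) and have finite nonnegative limits as $t\to\xi^{+}$; hence on $(\xi,t_0]$ each is bounded above by its value at $t_0$, and therefore $(abc)^2=(ab)(ac)(bc)\le C^2$ there for a suitable constant $C$. Because $\xi>-\infty$, the interval $(\xi,t_0]$ is bounded, so $L(\gamma)\le C\,(t_0-\xi)<\infty$. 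Applying the same bound to subintervals, $d_g(\gamma(s),\gamma(s'))\le\bigl|\int_{s}^{s'}abc\,dt\bigr|\to 0$ as $s,s'\to\xi^{+}$, so $\{\gamma(\xi+\tfrac1n)\}_n$ is a $d_g$-Cauchy sequence.

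Finally one checks that this Cauchy sequence has no limit in $M$: the coordinate $t$ is continuous on $N\times(\xi,\eta)$, and $t(\gamma(\xi+\tfrac1n))=\xi+\tfrac1n\to\xi\notin(\xi,\eta)$, so no limit exists; hence $(M,g)$ is incomplete. The step I expect to need the most care is this escaping claim together with the assertion that completeness cannot be restored by adjoining an orbit at $t=\xi$: a regular orbit cannot be adjoined, since $(\xi,\eta)$ is by definition the maximal interval on which \eqref{SU2-ode} has a solution; a singular orbit cannot either, which one sees by writing the system in the form $dr^2+h_r$ near a putative bolt as in Section~\ref{sec:heis} and checking that the Verdiani--Ziller parity conditions force $(a,b,c)$ to run into an equilibrium of \eqref{SU2-ode} in a manner that a short asymptotic analysis of that system shows to be possible only as $t\to-\infty$. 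Granting this, $M$ is the open tube, the Cauchy sequence above genuinely diverges, and the proposition follows.
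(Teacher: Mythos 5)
Your proposal is correct in substance but takes a genuinely different, and in one respect more elementary, route than the paper. The paper establishes finiteness of $\int_\xi^{t_0}abc\,dt$ by a case analysis: it first shows $b\to 0$ at $\xi$, then splits according to whether $a$ or $c$ is eventually decreasing, derives in each case the blow-up $a\to\infty$ (resp.\ $c\to\infty$) together with the explicit asymptotics $a\simeq(t-\xi)^{-1/2}$, $b,c\simeq(t-\xi)^{1/2}$, and reads off convergence of the length integral from these. Your observation that $(abc)^2=(ab)(ac)(bc)$ is a product of three functions increasing on $\mathcal{R}$, hence bounded on $(\xi,t_0]$ by their values at $t_0$, so that $L(\gamma)\le C(t_0-\xi)<\infty$ from finiteness of $\xi$ alone, replaces all of that with two lines and is correct. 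What the paper's longer computation buys is the divergence of a metric coefficient, which immediately rules out any smooth extension of $g$ across $t=\xi$ --- precisely the step you rightly flag as delicate but leave as a sketch (``Granting this''), and your proposed detour through the Verdiani--Ziller parity conditions is more machinery than is needed. To close it cheaply inside your framework: attaching any orbit at $t=\xi$ smoothly would force $(a,b,c)$ to have a finite limit there. If that limit is not an equilibrium of \Ref{SU2-ode}, the solution extends continuously to $t=\xi$ with admissible data and hence past $\xi$ by local existence, contradicting maximality of $(\xi,\eta)$; if it is an equilibrium, the extended function is $C^1$ on $[\xi,\eta)$ and solves the locally Lipschitz system with equilibrium initial value, so uniqueness forces it to be constant, again a contradiction. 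With that substitution your argument is complete, and arguably cleaner than the paper's; the paper's asymptotics remain of independent use since they exhibit exactly how the solution degenerates.
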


\begin{proof} The limit of $b$ as \( t\rightarrow \xi \) is zero: If the
limit of \( b \) were non-zero, then both $a$ and $c$ would also have limits, and then all three
functions could be extended continuously to \( \xi \) itself, violating the maximality of
the interval \( (\xi , \eta ). \) Therefore,
\[ \lim _{t\rightarrow \xi } b(t) =0. \]

We observed above that at any particular point, at most one of the functions \( a, b, \) or $c$
can be decreasing. However, under the assumption that \( a\geq b,\) the derivative of $b$ is
positive, so $b$ increases throughout the entire interval of existence. Moreover, it is not
possible that all three increase on all of \( (\xi , \eta ), \) because if they did, then all
three could be extended continuously to the lower endpoint $\xi ,$ contradicting the maximality
of the interval \( (\xi ,\eta ). \) There remain  therefore two possibilities to consider.

{\bf (i)} Suppose at some point \( u\in (\xi ,\eta ), \) that \( a'(u) <0. \) Then
\( -a^{2} +b^{2} +c^{2} <0 \) at this point. Calculating the derivative of this quantity, we find
that
\[ \frac{d}{dt} (-a^{2} +b^{2} +c^{2} ) = a^{4} -(b^{2} -c^{2} )^{2} +2e^{-A} abc^{2} > 0. \]
In other words, whenever \( -a^{2} +b^{2} +c^{2} \) is  negative, the derivative of this quantity
is positive. This implies that if $a$ decreases at any point $u,$ then it decreases on all of
\( (\xi , u ). \) A calculation also shows that where \( a'(t) <0, \) that \( a''(t) >0, \)
and this will be used later.

Since at most one of the three functions can decrease at a point or on an interval, $c$ must be
increasing on all of \( (\xi ,u). \) Therefore, \( \lim _{t\to \xi^- } c(t) \) exists. We
already know that \( b\rightarrow 0 \) as \(  t\searrow \xi .\) If \( a(t) \) approached a
finite limit as $t$ approached \( \xi ,\) then all three functions could be continuously
extended to \( \xi, \)  contradicting  the maximality of the interval of
existence. Therefore,
\[ \lim _{t\to \xi^- } a(t)= \infty . \]
Since \( a\rightarrow \infty ,\) but \( ac \) approaches a finite limit, it must be that
\( \lim _{t\rightarrow \xi } c(t) = 0. \) It follows, then, that as \( t\searrow \xi ,\) the
system can be approximated by
\begin{eqnarray*}
  a' & = & \frac{a}{2} (-a^{2} ) \\
  b' & = & \frac{b}{2} (a^{2} ) \\
  c' & = & \frac{c}{2} (a^{2} +2e^{-A} B) ,
\end{eqnarray*}
where \( B = \lim _{t\rightarrow \xi } ab ,\) a non-negative number. This system can be solved
explicitly. Solving, 
we find that
\begin{align*} a(t) &\simeq \frac{1}{\sqrt{t-\xi } } ,\\
 b(t) &\simeq C\sqrt{t-\xi } ,\\
 c(t) &\simeq D \sqrt{t-\xi } ,
\end{align*}
for constants $C$ and $D$ whose values do not affect the completeness question.
Regarding that
question, we recall that the  metric is of the form
\[ g = (abc)^{2} dt^{2} + a^{2} \sigma _{1} ^{2} + b^{2} \sigma _{2} ^{2} + c^{2} \sigma _{3} ^{2} . \]
Let
\( \gamma (s) \) be a curve that is constant in the orbit direction, and with  \( t(s) = s. \) Then
\[ l(\gamma ) = \lim _{\varepsilon \rightarrow 0} \int _{\xi + \varepsilon } ^{t_{1} } (abc) (s) \ ds < \infty  \]
by direct calculation. Since the length of this curve is finite, the distance to the boundary at $t=\xi$ is also finite, and the metric is incomplete.

{\bf (ii)} There is a point \( u\in (\xi , \eta ) \) at which \( c'(u) < 0. \) Then
\( a^{2} + b^{2} +2e^{-A} ab -c^{2} < 0 \) at that point. Similarly to above, we calculate the
derivative of this quantity:
\[ \frac{d}{dt} (a^{2} + b^{2} +2e^{-A} ab -c^{2} ) = c^{4} - (a^{2} -b^{2} )^{2} . \]
At $u,$ we have
\[ c^{2} > a^{2} +b^{2} + 2e^{-A} ab > a^{2} +b^{2} , \]
and therefore,
\[ c^{4} > (a^{2} + b^{2} )^{2} \geq (a^{2} -b^{2} )^{2} . \]
We therefore see that the derivative of this quantity is positive, implying that
\( c' (t) <0 \) on the entire interval \( (\xi ,u). \) We also find that \( c''(t) > 0 \) on
this interval. It must be that \( c \rightarrow \infty \) and \( a, \, b \rightarrow 0 \) as
\( t\rightarrow \xi .\) Permuting the roles of \( a, \, b, \) and $c$ in the approximated
equations that appeared in Case (i), we again find that the metric is incomplete.

\end{proof}

\subsection{Maximal solution intervals of the form $(-\infty,\eta)$}
We turn now to those trajectories for which \( \xi = -\infty .\) Analyzing the behavior of these
solutions as \( t\rightarrow -\infty ,\) we find which equilibrium points these approach.

\begin{prop} A trajectory for which \( \xi = -\infty \) converges to an equilibrium solution
of the form \( (q, q, 0), \) with \( q> 0, \) or \( ( q, 0, q) \) as \( t\rightarrow -\infty .\)
\end{prop}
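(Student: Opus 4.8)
The plan is to prove this in two stages. First I would show that \emph{any} trajectory in $\mathcal{R}$ with $\xi=-\infty$ actually has a limit: $(a,b,c)(t)\to(L_a,L_b,L_c)$ with all three coordinates finite as $t\to-\infty$. Once that is in hand, the second stage is routine: a limit point must be a rest point of \Ref{SU2-ode}, the rest points in $\overline{\mathcal{R}}$ are exactly $(q,q,0)$, $(0,q,q)$, $(q,0,q)$ with $q\ge 0$, and the standing assumption $a\ge b$ (hence $L_a\ge L_b$) excludes $(0,q,q)$ with $q>0$, leaving precisely $(q,q,0)$ with $q>0$ or $(q,0,q)$ with $q\ge 0$ (the origin being the $q=0$ member of the latter family).

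For the convergence stage I would exploit the monotonicity structure already assembled. Recall $b'>0$ throughout, at most one of $a,b,c$ can be decreasing at any $t$, and $ab,ac,bc$ are increasing with finite limits as $t\to-\infty$. The sign of $a'$ is that of $-a^2+b^2+c^2$, and by the identity $\tfrac{d}{dt}(-a^2+b^2+c^2)=a^4-(b^2-c^2)^2+2e^{-A}abc^2$ computed above, at a zero of $-a^2+b^2+c^2$ (where $a^2=b^2+c^2$) this derivative equals $4b^2c^2+2e^{-A}abc^2>0$; so $-a^2+b^2+c^2$ changes sign at most once, from $-$ to $+$, and the set $\{a'<0\}$ is an interval $(-\infty,u)$ (or empty). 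The analogous statement holds for $c'$ via $\tfrac{d}{dt}(a^2+b^2+2e^{-A}ab-c^2)=c^4-(a^2-b^2)^2$, and $\{a'<0\}$ and $\{c'<0\}$ cannot both be nonempty (two of $a,b,c$ would decrease simultaneously on their common tail). Thus there are only three scenarios: (A) $a'\ge 0$ and $c'\ge 0$ everywhere, whence all three functions are increasing forward and therefore bounded and monotone as $t\to-\infty$, hence convergent — note this possibility, impossible when $\xi$ is finite, is now allowed; (B) $a'<0$ on $(-\infty,u)$ with $c'\ge 0$ everywhere; (C) the mirror case $c'<0$ on $(-\infty,u)$ with $a'\ge 0$ everywhere. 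In case (B), on $(-\infty,u)$ only $a$ can be decreasing, so $b,c$ are bounded there by their values at $u$, and $a$ is monotone on that tail; similarly in (C).

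What remains is to rule out a runaway $a\to+\infty$ (in case (B)) or $c\to+\infty$ (in case (C)). Here the key computation is $\tfrac{d}{dt}(a^{-2})=1-(b^2+c^2)/a^2$: if $a\to\infty$ while $b,c$ stay bounded on $(-\infty,u)$, then the right-hand side tends to $1$, so $a^{-2}(t)<a^{-2}(t_1)-\tfrac12(t_1-t)\to-\infty$ as $t\to-\infty$, contradicting $a^{-2}>0$. (This is the $\xi=-\infty$ counterpart of the $a\simeq(t-\xi)^{-1/2}$ blow-up that occurred when $\xi$ was finite.) The identical argument with $\tfrac{d}{dt}(c^{-2})=1-(a^2+b^2+2e^{-A}ab)/c^2$ handles case (C). Hence $a$ and $c$ are bounded and eventually monotone, $b$ is monotone and bounded, and all three converge to finite limits. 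Finally, if $f$ denotes the right-hand side of \Ref{SU2-ode} and $f(L_a,L_b,L_c)\ne 0$, then some coordinate of $(a,b,c)$ has derivative bounded away from $0$ near $t=-\infty$, forcing that coordinate to $-\infty$; so the limit is an equilibrium, and the identification above finishes the proof.

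The main obstacle is the convergence stage: setting up the one-sign-change bookkeeping for $a'$ and $c'$ so as to force eventual monotonicity and boundedness, and within it excluding the runaway solutions via the $x^{-2}$ estimate. Everything afterward — that the $\alpha$-limit set is a single equilibrium, and that $a\ge b$ pins it down to the two listed families — is straightforward.
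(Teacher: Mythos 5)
Your proof is correct and rests on the same ingredients as the paper's: the monotonicity of $b$ and of the products $ab$, $ac$, $bc$ (so at most one of $a,b,c$ decreases at a time), the observation that $-a^{2}+b^{2}+c^{2}$ (respectively $a^{2}+b^{2}+2e^{-A}ab-c^{2}$) has positive derivative wherever it is negative, and a blow-up contradiction ruling out $a\to\infty$ or $c\to\infty$ as $t\to-\infty$. The organization differs mildly: the paper first splits on whether $\lim_{t\to-\infty}b$ is zero or positive and only then on where $a$ or $c$ decreases, whereas you establish convergence of all three coordinates first and identify the limit afterwards; your bookkeeping that $\{a'<0\}$ and $\{c'<0\}$ are backward-closed intervals which cannot both be nonempty is exactly the paper's ``once negative, always negative in the past'' argument, repackaged. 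The one genuine improvement is local: where the paper rules out $a\to\infty$ by approximating the system by $a'=-\tfrac12 a^{3}$ and appealing to the explicit solution, your exact identity $\tfrac{d}{dt}(a^{-2})=1-(b^{2}+c^{2})/a^{2}$, combined with the boundedness of $b,c$ on the tail, turns that asymptotic heuristic into a rigorous differential inequality (and likewise for $c^{-2}$ in the mirror case). The final identification step --- that the limit of a convergent trajectory is an equilibrium, and that the standing assumption $a\ge b$ excludes $(0,q,q)$ with $q>0$ --- matches the paper's; note only that the paper's Case (1) additionally extracts $a\equiv b$ for trajectories limiting to $(q,q,0)$, $q>0$, a fact not required by the proposition as stated but used later in the classification.
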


\begin{proof} Since $b$ is increasing, its
limit as \( t\rightarrow -\infty \) exists, and so again the behavior of $b$ gives a
convenient way to split into cases.

{\bf (1)} Suppose first that \( \lim _{t\rightarrow -\infty } b(t) >0. \) In this case, and
because $ab$ and $bc$ are also increasing functions, $a$ and $c$ also have finite limits as
\( t\rightarrow -\infty .\) Comparing to the list of possible equilibrium solutions, and
remembering that we have assumed without loss of generality that \( a(t) \geq b(t), \) we see
that \( (a(t), b(t), c(t) ) \rightarrow (q, q ,0), \) with \( q >0. \) This
implies that
\[ \lim _{t\rightarrow -\infty } \frac{a(t)}{b(t)} =1. \]
From the earlier lemma,
\[ \frac{d}{dt} \Big(\frac{a}{b} \Big) = \frac{a}{b} (-a^{2} +b^{2} ) .\]
This is non-positive under the assumption that \( a \geq b, \) and it also follows that
\( a(t) /b(t) \geq 1 \) and is either strictly decreasing or else identically equal
to one. It can only be that \( a(t) \equiv b(t) \) for all \( t\in (-\infty , \eta ). \)

{\bf (2)} Now suppose that \( \lim _{t\rightarrow -\infty } b(t) =0. \) There are several
possibilities to consider.

{\bf (i)} Suppose there exists \( u \in (-\infty , \eta ) \) at which \( a'(u) <0. \) The
earlier calculation showed that at any point or on any interval where \( -a^{2} +b^{2} +c^{2} <0, \) that the derivative of this quantity is positive, and therefore \( -a^{2} +b^{2} +c^{2} \) remains
negative on all of \( (-\infty , u). \) Then $a$ is decreasing on all of \( (-\infty ,u), \)
and since at most one of the three functions can decrease on an interval, it follows that $b$ and
$c$ are non-decreasing. Those same calculations also give us that \( a''(t) >0 \) on this
interval and that
\[ \lim _{t\rightarrow -\infty } a(t) = \infty .\]
Since both \( \lim _{t\rightarrow -\infty } c(t) \) and \( \lim _{t\rightarrow -\infty } ac \)
are finite numbers, it must be that \( \lim _{t\rightarrow -\infty } c(t) = 0. \) For large,
negative values of $t$ therefore, the first equation can be approximated in the asymptotic
sense by
\[ a' = \frac{a}{2} (-a^{2} ) .\]
By a direct calculation, $a$ diverges at a finite value in the interval of existence,
which is a contradiction.

{\bf (ii)} The second possibility is that $c$ decreases at some point \( u\in (-\infty ,\eta ). \) As calculated previously, if \( c'(u) > 0, \) that is, if \( a^{2} + b^{2} +2e^{-A} ab -c^{2} < 0 \) at \( u\in (-\infty ,\eta ), \) then the derivative of this quantity is positive, and so $c$
remains a decreasing function on all of \( (-\infty ,u ). \) Also, \( c''(t) > 0 \) on this
interval, and so \( \lim _{t\rightarrow -\infty } c(t) = \infty . \) Because the product $ac$
has a finite limit, we have \( \lim _{t\rightarrow -\infty } a(t) =0. \) The third equation can be approximated for negative values of $t$ of large magnitude by
\[ c' = \frac{c}{2} (-c^{2} ) , \]
implying that \( c(t) \rightarrow \infty \) at an interior point, a contradiction.

{\bf (iii)} If all three of \( a, \, b, \) and $c$ are increasing on all of \( (-\infty , \eta ), \) then all three have finite limits as \( t\rightarrow -\infty , \) so \( (a(t), \, b(t), c(t) ) \)
converges to an equilibrium solution. Checking the list, the equilibrium solution is
\( (q, 0, q ) ,\) with the possibility \( q= 0\) not excluded.

\end{proof}

\subsection{Equilibrium $\mathbf{(q,q,0)}$, $\mathbf{q>0}$}
We now investigate the completeness of the metrics that correspond to trajectories converging to
equilibrium solutions of the form \( (q, q ,0) \)

So assume the initial value was chosen to lie on the trajectory of an unstable curve of \( (q, q ,0), \)
with \( q >0. \)

\subsubsection{The endpoint $\xi=-\infty$}

For large, negative values of $t,$ both $a$ and $b$ can be approximated by \( q ,\) but for
\( c(t) ,\) we examine the equation itself, in order to determine the rate at which \( c(t) \rightarrow 0.\) Since \( c^{2} \) is small compared to $c,$ the third equation can be approximated
by
\[ c' \simeq (1+e^{-A} ) q ^{2} c. \]
For the moment, we will write \( \gamma = (1+e^{-A} ) q ^{2} , \) a positive constant. Solving the
equation gives
\[ c(t) \simeq k e^{\gamma t} , \]
where $k$ is a further constant. The metric can then be approximated as \( t\rightarrow -\infty \) by
\[ g = q ^{4} k^{2} e^{2\gamma t} dt^{2} + q ^{2} \sigma _{1} ^{2} + q ^{2} \sigma _{2} ^{2} + k^{2} e^{2\gamma t} \sigma _{3} ^{2} . \]
Making the change of variables
\[ y(t) = \frac{q ^{2} k}{\gamma } e^{\gamma t} ,\]
as in [DS1], the metric can be written as
\[ g = dy^{2} + q ^{2} \sigma _{1} ^{2} + q^{2} \sigma _{2} ^{2} + \frac{\gamma ^{2} }{q^{4} } y^{2} \sigma _{3} ^{2} , \]
or
\[ g = dy^{2} +q^{2} \sigma _{2} ^{2} + q^{2} \sigma _{2} ^{2} + (1+e^{-A} )^2 y^{2} \sigma _{3} ^{2} , \]
where \( t\rightarrow -\infty \) if and only if \( y\rightarrow 0. \)
Thus the distance to the boundary corresponding to $\xi=-\infty$ is finite.
However, in this case, under certain conditions that metric and K\"ahler form extend
smoothly to a singular orbit (a bolt). We now show this.



\subsubsection{Attaching a singular orbit at $\xi=-\infty$ for equilibrium $(q,q,0)$}

Denoting $r=\int_{-\infty}^ra(s)b(s)c(s)\,ds$ the metric can be transformed to the form
$g=dr^2+a^2\sig_1^2+b^2\sig_2^2+c^2\sig_3^2$.  Recall that the solutions we are examining
satisfy $a=b$ with equilibrium $(q,q,0)$, $q\ne 0$. In that case the Lie algebra
of Killing vector fields is one dimension higher, and the metric is preserved by a corresponding action of $U(2)$. The manifold then has the form $M=U(2)\times_{K}\mathbb{R}^2\setminus\{ 0 \}$,
with $K=U(1)\times U(1)$, and we wish to examine whether it is possible to attach smoothly a singular orbit (so-called bolt) at $r=0$, where the finite distance end of the manifold resides.
As the principal orbits are $3$-dimensional, the isotropy group is $H=U(1)$, and $K$ acts
on $K/H\approx S^1$ non-effectively. However one of its factors acts, of course, effectively,
and since $H$ embeds as the other factor, for the purposes of examining smooth extendibility,
one can equally consider just the action of $SU(2)\subset U(2)$, which is still of cohomogeneity one, and regard the manifold as $SU(2)\times_{U(1)}\mathbb{R}^2\setminus\{ 0 \}$. We will take this point of view in what follows. Note that $U(1)$ acts on $\mathbb{R}^2$ by a restriction of one the representations of $SU(2)$.

With this change of variables, equations \eqref{SU2-ode} in that case take the form
\begin{align*}
\fr{da}{dr}&=\fr c{2a},\\
\fr{dc}{dr}&=1+e^{-A}-\fr{c^2}{2a^2}.\\
\end{align*}
We see from these equations that $a$ can be smoothly extended as an even function and $c$ as an odd
one near $r=0$. Using the notations in \cite{vz}, we denote
\[
\mathfrak{k}=\mathrm{span}(X_3),\qquad \mathfrak{m}=\mathrm{span}(X_1,X_2)=\ell_1\qquad
V=\mathrm{span}(\partial_r,X_3):=\ell_{-1}',
\]
where $X_i, i=1\ldots 3$ respectively denote a usual the dual vectors to $\sig_i$.

Consider now a solution $(a,b,c)$ approaching as $r\to 0$ the equilibrium point $(q,q,0)$, with $a=b$. As in $SU(2)$, with our choice of normalization of the Lie algebra basis, we have $\exp(4\pi X_3)=\mathrm{id}$, $2X_3$ generates an $U(1)$-action
whose rotational isotropy action on the plane spanned by it and $d/dr$ is by $a_1\theta$, and on $\mathfrak{m}$ by
$d_1\theta$, where the constants $a_1$ and $d_1$ are determined as follows. First,
\begin{align*}
a_1&=\lim_{r\to 0}\fr{|2X_3|}r=\lim_{r\to 0}\fr {2c}{r}=\lim_{r\to 0}2\fr {dc}{dr}\\
&=2\lim_{r\to 0}\Big(1+e^{-A}-\fr{c^2}{2a^2}\Big)=2(1+e^{-A}-\fr 0{2q^2})=2(1+e^{-A}).\\[2pt]
\end{align*}
Since $c^2(r)$ is even with no constant term, following \cite{vz},  smooth extendibility requires first of all that $2(1+e^{-A})$ is an integer.

Second, as $[X_3,X_1]= X_2$ and $[X_3,X_2]=- X_1$, we have $d_1=1$.
The remaining potentially nontrivial smoothness conditions for the metric in \cite{vz} are
\begin{align*}
a^2+b^2&=\phi_1(r^2),\\
a^2-b^2&=r^{\fr{2d_1}{a_1}}\phi_2(r^2),
\end{align*}
for some functions $\phi_1$, $\phi_2$. The first of these clearly holds as $a=b$
can be extended to an even function. The second is also obvious as $a=b$.

We turn to checking that the K\"ahler form extends across the singular orbit at $r=0$.
For the isotropic action of $SO(2)$ on $T_pM$, we find that $\partial_r+\frac{i}{r}X_3$ is an eigenvector with eigenvalue $e^{ia_1\theta}$, and $X_1+iX_2$ is an eigenvector with eigenvalue $e^{id_1\theta}$, and likewise for their complex conjugates.
Dualizing gives eigenspaces of $T^*_pM$: $dr-ir \sigma_3$ has eigenvalue $e^{ia_1\theta}$, and $\sigma_1-i\sigma_2$ has eigenvalue  $e^{id_1\theta}$.
Thus the eigenspaces of $\Lambda^2T^*_pM$ are
\begin{align*}
E_1 &= \mathrm{span}\{r dr\wedge \sigma_3, \sigma_1\wedge\sigma_2\}\\
E_{e^{i(a_1+d_1)\theta}}&=\mathrm{span}\{dr\wedge\sigma_1-r\sigma_3\wedge\sigma_2
+i(-dr\wedge\sigma_2-r\sigma_3\wedge\sigma_1)\}\\
E_{e^{i(a_1-d_1)\theta}}&=\mathrm{span}\{dr\wedge\sigma_1+r\sigma_3\wedge\sigma_2
+i(dr\wedge\sigma_2-r\sigma_3\wedge\sigma_1)\}\\
&\qquad
\end{align*}
The smoothness condition is the equivariance condition $\omega(e^{a_1\theta}p)=\exp(\theta X_3)^*\omega$.
This requires that the coefficient of
\begin{align}
E_1 & \text{ is } \phi_1(r^2), \\
E_{e^{\pm i(a_1-d_1)\theta}}& \text{ is } r^{\frac{|a_1-d_1|}{a_1}}\phi_2(r^2),\\
E_{e^{\pm i(a_1+d_1)\theta}}& \text{ is } r^{\frac{|a_1+d_1|}{a_1}}\phi_3(r^2).
\end{align}
Now we have
\begin{align*}\omega&=c\,dr\wedge\sigma_3+ab\sigma_1\wedge\sigma_2\\
&=\frac{c}{r}\cdot rdr\wedge\sigma_3+a^2\sig_1\we \sig_2.
\end{align*}
Thus our only nonzero coefficients are in $E_1$. Thus the smoothness conditions become
\begin{align*}
\fr cr &= \phi_1(r^2), \\
a^2 &= \phi_2(r^2). \\
\end{align*}
Both of these hold trivially from the even/odd extendibility of $a$, $c$. Thus in total,
the K\"ahler form extends smoothly across the singular orbit if and only $2e^{-A}$ is an integer.

\subsubsection{The endpoint $\eta$}
Next, we consider the question of completeness as \( t\rightarrow \eta \).

Recalling here that in this case that \( a \equiv b, \) the system reduces to
\begin{align}
 a' & =  \frac{a}{2} c^{2}\nonumber \\
 c' & =  \frac{c}{2} (2(1+e^{-A} ) a^{2} -c^{2} ) .\lb{a=b-case}
\end{align}

Following \cite{d-s1}, one can write the equations using the variables $w_1=bc$, $w_2=ac$, $w_3=ab$.
In the case at hand $w_1=w_2$, and the above two equations are then equivalent to the system
\begin{align}
 w_{1} ' & =   (1+e^{-A} ) w_{1} w_{3},\nonumber \\
 w_{3} ' & =  w_{1} ^{2} ,\lb{2d-sys}
\end{align}
with the metric given in terms of \( w_{1} \) and \( w_{3} \) as
\[ g = w_{1} ^{2} w_{3} dt^{2} + w_{3} \sigma _{1} ^{2} + w_{3} \sigma _{2} ^{2} + \frac{w_{1} ^{2} }{w_{3} } \sigma _{3} ^{2} . \]
A curve \( \gamma :[t_{0} ,\eta ) \rightarrow M \) which is constant in the orbit direction has
length
\[ l(\gamma ) = \int _{t_{0} } ^{\eta } w_{1} \sqrt{w_{3}} \ dt . \]
In order to determine the behavior of \( w_{1} \) and \( w_{3} \) as \( t\rightarrow \eta , \) \
first eliminate \( w_{1} \) to reduce to an equation in \( w_{3} \) alone, which integrated once gives
\be\lb{w3} w_{3} ' (t) = (1+e^{-A} ) w_{3} ^{2} (t) + \delta , \end{equation}
where \( \delta \) is a constant of integration.

\begin{lemma}
$\eta $ is finite.
\end{lemma}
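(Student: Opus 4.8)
The plan is to exploit the first-order reduction \eqref{w3}, which I write as $w_3'=k\,w_3^2+\delta$ with $k:=1+e^{-A}>0$, together with the sign information already in hand. Since $w_3'=w_1^2$ by \eqref{2d-sys} and $w_1=bc>0$ on $\mathcal{R}$, the function $w_3$ is strictly increasing on $(t_0,\eta)$, and moreover $k\,w_3^2+\delta=w_1^2>0$ identically there. A Riccati equation of this type cannot be solved on a forward-infinite interval once its right-hand side stays positive, because in separated form $dt=dw_3/(k\,w_3^2+\delta)$ the integral converges as $w_3\to\infty$; this convergence is the mechanism forcing finiteness of $\eta$.

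Concretely, I would argue by contradiction, assuming $\eta=+\infty$. Fix $t_0$ in the interval, put $m:=w_3(t_0)>0$ and $\epsilon:=w_3'(t_0)=k m^2+\delta>0$. Monotonicity gives $w_3(t)\ge m$ for $t\ge t_0$, so $w_3'(t)=k\,w_3(t)^2+\delta\ge k m^2+\delta=\epsilon$, whence $w_3(t)\ge m+\epsilon(t-t_0)\to+\infty$; in particular $w_3$ is unbounded. Separating variables and integrating from $t_0$,
\[
t-t_0=\int_{m}^{w_3(t)}\frac{d\sigma}{k\sigma^2+\delta},
\]
where on the range of integration $k\sigma^2+\delta\ge\epsilon>0$, so the integrand is positive, continuous, and $O(\sigma^{-2})$ at infinity; hence $\int_m^\infty d\sigma/(k\sigma^2+\delta)<\infty$. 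Letting $t\to\infty$ (using $\eta=\infty$) makes the left-hand side diverge while the right-hand side stays bounded, a contradiction. Therefore $\eta<\infty$.

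I do not expect a genuine obstacle here; the only point that needs a word of justification is that $w_3\to+\infty$ — so that the improper integral is genuinely taken to infinity — rather than tending to a finite limit. This is supplied by the linear lower bound $w_3(t)\ge m+\epsilon(t-t_0)$ above, which rests only on monotonicity of $w_3$ and positivity of $w_3'$ at a single point, both immediate from $w_1=bc>0$ on $\mathcal{R}$. Equivalently, a finite limit $w_3\to\ell$ would force $w_3'\to k\ell^2+\delta$, which can be neither positive, since then $w_3$ would grow without bound, nor zero, since $\delta=-k\ell^2$ would give $w_3'=k(w_3^2-\ell^2)<0$, contradicting $w_3'=w_1^2>0$.
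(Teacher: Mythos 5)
Your proof is correct and follows essentially the same route as the paper: both arguments rest on the Riccati reduction $w_3'=(1+e^{-A})w_3^2+\delta$ together with the positivity of $w_3'$, and conclude via the fact that a solution growing like $w^2$ must blow up in finite time. The paper phrases this last step as a comparison with $w'=w^2$, while you integrate the equation directly by separation of variables; the two mechanisms are interchangeable here.
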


\begin{proof}
First observe that \( w_{1} ', \, w_{3} ', \, w_{3} '' >0. \) Next, we see that \( w_{1} \) and
\( w_{3} \) become unbounded as \( t\rightarrow \eta ,\) whether \( \eta \) is finite or  not. If
\( \eta = \infty ,\) then since \( w_{3} ' \) and \( w_{3} '' \) are both positive, $w_{3} $ must
become unbounded; since \( w_{3} ' = w_{1} ^{2} ,\) then \( w_{1} \rightarrow \infty \) also. If
\( \eta < \infty ,\) then at  least one of \( w_{1} , \, w_{3} \) becomes unbounded, because otherwise the maximality of the interval of existence would be contradicted. From the equations,
if one becomes unbounded, so does the other.

A comparison argument now shows that in fact \( \eta < \infty .\) Since \( w_{3} \rightarrow \infty \) as
\( t \rightarrow \eta , \) there exists an $M$ so that for all \( t\in (M, \eta ), \)
\[ (1+e^{-A} )w_{3} ^{2} (t) + \delta > w_{3} ^{2} (t) .\]
Choose \( t_{1} \in (M, \eta ), \) and consider the comparison equation
\[ w' = w^{2} , \]
with initial value \( (t_{1} ,w_{3} (t_{1} ) ). \) The actual solution \( w_{3} \) passes through the point \( (t_{1} , w_{3} (t_{1} ) ) \) and has a steeper derivative at every point, so \( w_{3} \) lies above the comparison function $w.$ The comparison function becomes unbounded at a finite
value \( \eta _{c} ; \) its solution is \( w = 1/(\eta _{c} -t) .\) Therefore, \( w_{3} \) also
diverges to $\infty$ at some \( \eta \leq \eta _{c} < \infty .\)

\end{proof}

Since \( w_{3} \rightarrow \infty \) as
\( t\rightarrow \eta ,\) a multiple of that same comparison function also serves in the asymptotic sense:
\[ \lim _{t\rightarrow \eta } \frac{(1+e^{-A} )w_{3} ^{2} (t) +\delta }{(1+e^{-A} )w_{3} ^{2} (t) } = 1. \]
The behavior of \( w_{3} \) can be discerned from that of $w,$ the solution to \( w' = (1+e^{-A} )w^{2} .\) This solution is a multiple of \( 1/(\eta -t) .\) Using the equation \( w_{3} ' = w_{1} ^{2} , \) we see that
\[ \int _{t_{1} } ^{\eta } w_{1} \sqrt{w_{3} } \  dt =\infty, \]
since the integrand is asymptotically $(t-\eta)^{-3/2}$.
Thus the metric is complete.

\subsection{Equilibrium $\mathbf{(0,0,0)}$}

\subsubsection{Reduction to an explicit solution}
The case of equilibrium $(0,0,0)$ contains many of the ideas already introduced, so we will
be brief. First, in order to find the rates of convergence of $a$, $b$, $c$ as $t\to-\infty$,
convert the system \Ref{SU2-ode} from the variable $t$ to the variable $r=\int_{-\infty}^tabc\,ds$. Solutions approaching this equilibrium have that $a$, $b$, $c$
can be extended smoothly as odd functions of $r$. Writing odd power series expansions of $a$, $b$, $c$ in $r$ and solving for the first order coefficients $\hat{a}$, $\hat{b}$, $\hat{c}$ respectively, yields positive solutions $\hat{a}=\hat{b}=\sqrt{\gamma}/2$, $\hat{c}=\gamma/2$ for $\gamma:=1+e^{-A}$. Hence $a/b$ tends to $1$ as $r\searrow 0$ (or $t\to-\infty$), and as
before we must have $a\equiv b$.

Thus in this case the ODE system again becomes \Ref{2d-sys} and equation \Ref{w3}
also holds. Now since $w_3'=w_1^2$, $w_3$ is an increasing function converging to $0$ as $t\to-\infty$. Therefore $w_3'$ converges to $0$ as $t\to-\infty$ and thus $w_3'-\gamma w_3^2\to0-\gamma 0=0$, so that we have $\delta=0$. The ODE system thus simplifies and its
solution is just a case of the one used as comparison in the previous subsection, specifically
$a(t)=\sqrt{\fr1{-\gamma t}}$, $c(t)=\sqrt{\fr{1}{-t}}$.


\subsubsection{No smooth extension to a singular orbit}
The distance to the endpoint $t=-\infty$ in this explicit metric is finite.
Hence one needs to investigate whether the metric and K\"ahler form can be extended smoothly to a singular orbit. The singular orbit in this case is just a point, i.e. a ``nut".
But in the $r$ coordinate one easily sees that $a(r)=\sqrt{\gamma}r/2$, $c(r)=\gamma r/2$, and since we cannot have both $a'(0)=1$ and $c'(0)=1$ simultaneously, the metric is not complete. This can alternatively be deduced from the statement of the smoothness condition in \cite{vz} for the
case where the isotropy subgroups of a singular fiber is $Sp(1)$ and generic isotropy subgroup is trivial.


\subsection{Equilibrium $\mathbf{(q,0,q)}$, $\mathbf{q>0}$}

Suppose now that the initial value is chosen to lie on an unstable curve of \( (q , 0, q ), \) with \( q > 0. \)

For large, negative values of $t,$ the functions $a$ and $c$ can be approximated by \( q ; \) to discover the rate of vanishing of $b,$ we examine the equation
\[ b' = q ^{2} b. \]
This is easily solved to obtain \( b(t) = k e^{q ^{2} t} .\) After a change of variables
\( v(t) = k e^{q^{2} t}  \) (the same as in [DS]), the metric is approximated by
\[ g = dv^{2} + q ^{2} \sigma _{1} ^{2} + v^{2} \sigma _{2} ^{2} + q ^{2} \sigma _{3} ^{2} , \]
with \( t\rightarrow -\infty \) if and only if \( v\rightarrow 0. \) This metric is incomplete.
However, in this case we cannot extend the metric smoothly to a singular orbit. Namely,
upon switching to the coordinate $r$, so that the metric has the form $dr^2+h_r$, in the resulting
ODE system $c$ can be extended only as an odd function near $r=0$ (whereas $a$ and $b$ can be extended either both as even, or both as odd functions). But an odd function can't have a nonzero value $q$ at
$r=0$. Therefore the corresponding metric is necessarily incomplete, and we do not pursue this
case further.

Collecting these investigations, we summarize the findings.

\begin{thm}\lb{thm3}
Let $(M,g)$ be a Riemannian $4$-manifold admitting a cohomogeneity one $SU(2)$-action by isometries. Then $g$ is a complete diagonal centrally flat K\"{a}hler metric precisely when
it is of the form \Ref{bianchiAmet} with $(a, b, c)$ an unstable solution curve of the system \Ref{SU2-ode} for an equilibrium point $(q,q,0)$, $q>0$, defined on a maximal interval. Such metrics satisfy $a=b$, and $M$ contains a unique singular orbit.
\end{thm}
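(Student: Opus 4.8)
The plan is to assemble the statement from the results already established in Section~\ref{sec:SU2}, arranging them into the claimed equivalence. I would begin by recalling the structural input at the start of the section: a diagonal cohomogeneity one metric under $SU(2)$ necessarily has the form \Ref{bianchiAmet}, the K\"ahler condition forces the complex structure \Ref{complexJ} together with $\alpha=\beta$ and $\gamma=0$, and imposing central flatness ($\lambda=0$) together with the first integral $\alpha=e^{-A}(ab)$ collapses the full set of equations to the three-dimensional system \Ref{SU2-ode}. By the positivity/uniqueness remark the corresponding trajectory stays in $\mathcal R$, and after possibly interchanging $\sigma_1$ and $\sigma_2$ we may assume $a\ge b$, so that $b$ is strictly increasing on its maximal interval $(\xi,\eta)$.

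For the forward implication, suppose $g$ is complete. The first Proposition of the section rules out $\xi>-\infty$ (it forces incompleteness), so $\xi=-\infty$; the second Proposition then says the trajectory converges as $t\to-\infty$ to $(q,q,0)$ with $q>0$ or to $(q,0,q)$ with $q\ge0$. I would exclude the latter two using the corresponding subsections: for $(q,0,q)$ with $q>0$ the $t\to-\infty$ end is at finite distance but no smooth singular orbit can close it, since $c$ would have to be odd in the boundary coordinate $r$ while taking the nonzero value $q$ at $r=0$; for $(0,0,0)$ one gets the explicit solution $a=\sqrt{1/(-\gamma t)}$, $c=\sqrt{1/(-t)}$, again at finite distance, but closing with a ``nut'' would require $a'(0)=c'(0)=1$ simultaneously, which is impossible. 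Since a complete $g$ is in particular a smooth metric on a genuine manifold, only the equilibrium $(q,q,0)$ with $q>0$ survives; there the equation for $(a/b)'$ in the preliminary Lemma together with the limit $a/b\to1$ forces $a\equiv b$, and a $\xi=-\infty$ trajectory limiting to this equilibrium is, by the Center Manifold Theorem, an unstable curve of $(q,q,0)$, defined on a maximal interval $(-\infty,\eta)$. The subsection on that equilibrium then supplies the rest: the $\xi=-\infty$ end is at finite distance and, under the integrality of $2(1+e^{-A})$ dictated by the Verdiani--Ziller conditions, both $g$ and $\omega$ extend smoothly across a singular orbit (a bolt); the $\eta$ end is at \emph{infinite} distance (the ``$\eta$ is finite'' Lemma together with the asymptotics $w_1\sqrt{w_3}\sim(t-\eta)^{-3/2}$) and no metric coefficient tends to zero there, so no orbit degenerates at $\eta$. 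Hence $M$ is obtained from the regular part by attaching exactly this one singular orbit, which gives both the last assertion and its uniqueness.

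For the converse, given an unstable solution curve $(a,b,c)$ for $(q,q,0)$, $q>0$ (so $a\equiv b$, with maximal interval $(-\infty,\eta)$), I would run the construction backwards: \Ref{bianchiAmet} is diagonal by inspection, it is K\"ahler with complex structure \Ref{complexJ} because $(a,b,c)$ solves the reduced K\"ahler system, it is centrally flat because $\lambda=0$ was built in, and it is not Ricci flat since $\alpha=e^{-A}ab>0$; completeness then follows from the two endpoint analyses above, namely infinite distance at $\eta$ and smooth closure by a bolt at $\xi=-\infty$. I expect the one genuinely delicate point, beyond bookkeeping, to be the finite-distance end at $\xi=-\infty$: one must argue that completeness there is \emph{equivalent} to smooth attachment of a unique singular orbit, and the sole nonautomatic Verdiani--Ziller smoothness requirement---the integrality of $2(1+e^{-A})$---is to be read as implicit in the hypothesis that $M$ is a manifold carrying such an action. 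Everything else is a direct transcription of the Propositions and Lemmas proved earlier.
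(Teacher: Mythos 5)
Your proposal is correct and follows essentially the same route as the paper, which proves Theorem~\ref{thm3} simply by collecting the section's two propositions and the three equilibrium-by-equilibrium analyses in exactly the order you describe. Your explicit flagging of the integrality condition on $2(1+e^{-A})$ (and $2e^{-A}$ for the K\"ahler form) as the one nonautomatic Verdiani--Ziller requirement is in fact slightly more careful than the paper's own summary, which leaves that condition implicit in the theorem statement.
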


With regard to the explicitness of these solutions, note that one could have proceeded
with the system \Ref{a=b-case} by making the change of variables as in section \ref{sec:heis}.
This would give a similar explicit solution, where this time
\[
\phi(q)=\fr{e^k}{2\gamma}e^{2\gamma q}+B, \text{ with $\gamma=1+e^{-A}$ and constants $k$, $B$.}
\]
The case of positive $B$ corresponds to a solution converging to equilibrium $(q,q,0)$, $q>0$, whereas $B=0$ yields one converging to $(0,0,0)$.

\section{Centrally flat metrics under the Euclidean Group of plane motions}\lb{sec:Euc}
In this section we describe a complete triaxial centrally flat
metric with a cohomogeneity one action of the Euclidean group $E(2)$.
The method employed is that of the recent \cite{mr}, which in turn
was inspired by \cite{d-s1}.


We set $p_2=0$, $p_1=p_3=1$, and $\lambda=0$.
Then the Lie algebra spanned by $X_1,X_2,X_3$ is the Lie algebra of the Euclidean group.
The equations for zero central curvature  are, from \Ref{K1}-\Ref{KKE} and  \Ref{alpha}
\begin{align}
\label{E2ODEa}a'&=\frac{a}{2}(-a^2+c^2), \\
\label{E2ODEb}b'&=\frac{b}{2}(a^2+c^2), \\
\label{E2ODEc}c'&=\frac{c}{2}(a^2-c^2+2\al),\\
\lb{E2ODEd} \al'&=\al c^2.
\end{align}
These can be reduced to a system of three equations as in section \ref{sec:SU2}, but we will
generally stick with the above version.

As in the case of $SU(2)$, the derivatives in this system are given by polynomials in the dependent variables, hence are locally Lipschitz, so that standard ODE theory applies. The symmetries of these equations include, as they are autonomous, constant shifts in $t$. Additionally, the equations possess a scaling symmetry
\[
(a(t),b(t),c(t),\al(t))\to (ka(k^2t),b(k^2t),kc(k^2t),k^2\al(k^2t)),
\]
taking solutions to solutions.

\subsection{Linearization about Equilibria}
The equilibrium solutions are $(q,0,q,0)$ and $(0,p,0,r)$, and we concentrate
on the nonzero case.
Then the system \Ref{E2ODEa}-\Ref{E2ODEd}
has linearization about $(q,0,q,0)$ given by
\begin{align*}
a'&=-q^2a+q^2c, \\
b'&=q^2b, \\
c'&=q^2a-q^2c+q\al, \\
\al'&=q^2\al,
\end{align*}
which has one double positive, one negative and one zero eigenvalue for $q> 0$. The linearization about $(0,p,0,r)$ has three zero eigenvalues and one with the sign of $r$.

\begin{thm}\lb{thm4}
A solution of \Ref{E2ODEa}-\Ref{E2ODEd} yields a complete centrally flat metric
of the form \Ref{bianchiAmet} on a cohomogeneity one $E(2)$ $4$-manifold if it
is a solution along an unstable curve of an equilibrium point $(q,0,q,0)$, $q>0$.
\end{thm}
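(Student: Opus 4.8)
The plan is to follow the template already established for the $SU(2)$ case in Section~\ref{sec:SU2} and for the Heisenberg quotient in Section~\ref{sec:heis}, adapting it to the system \Ref{E2ODEa}-\Ref{E2ODEd}. First I would observe, just as in the $SU(2)$ discussion, that the fourth equation $\al'=\al c^2$ together with the second, $b'=\tfrac b2(a^2+c^2)$, does not immediately give $\al$ proportional to $b$; rather, from \Ref{E2ODEc} and the identity $\tfrac d{dt}(ac)=\tfrac12 ac(2\al)$... wait — more carefully: using \Ref{E2ODEa} and \Ref{E2ODEc} one computes $\tfrac d{dt}(ac)=ac\,\al$, so that $\tfrac d{dt}\log(ac/\al)=0$ and hence $\al=e^{-A}ac$ for a constant $A$. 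Substituting this eliminates the fourth equation and turns \Ref{E2ODEc} into $c'=\tfrac c2(a^2-c^2+2e^{-A}ac)$, giving a three-variable polynomial (hence locally Lipschitz) system. One then restricts attention, exactly as before, to trajectories with positive initial data, noting this region is preserved by the flow.

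Next I would analyze the maximal interval of existence $(\xi,\eta)$. Along the lines of the propositions in Section~\ref{sec:SU2}, I would show that any trajectory converging as $t\to-\infty$ to the equilibrium $(q,0,q,0)$ with $q>0$ must have $\xi=-\infty$, and these are precisely the unstable curves whose existence is guaranteed by the Center Manifold Theorem (using that the linearization at $(q,0,q,0)$ has a positive eigenvalue of multiplicity one — here the statement says the positive eigenvalue is double, so care is needed; one works on the two-dimensional unstable manifold). Near $t=-\infty$ one has $a,c\approx q$ and $b$ governed by $b'\approx q^2 b$, so $b\simeq k e^{q^2 t}$; the change of variables $v=ke^{q^2t}$ puts the metric in the asymptotic form $dv^2+q^2\sigma_1^2+v^2\sigma_2^2+q^2\sigma_3^2$, so the distance to this end is finite and a singular orbit (bolt) must be attached there. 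The smoothness of the metric and K\"ahler form across this bolt is then checked against the Verdiani-Ziller conditions \cite{vz}: converting to the coordinate $r=\int_{-\infty}^t abc\,ds$, one shows $b$ extends as an odd function and $a,c$ as even functions near $r=0$, and verifies the parity conditions $g(\mathfrak m,\mathfrak m)$ even, $g(\mathfrak p,\mathfrak m)$ of the form $r^2\psi(r^2)$, $g(X,X)=\bar a^2 r^2+r^4\xi(r^2)$, together with the analogous conditions for $\omega$, possibly under an integrality constraint on the relevant rotation constant as in the $SU(2)$ case.

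Finally I would treat the other end $t\to\eta$ and show the metric is complete there. Following Dancer-Strachan \cite{d-s1}, pass to the variables $w_1=bc$, $w_2=ac$, $w_3=ab$; the Kähler relations force a reduction, and one shows the trajectory runs to $t=\eta$ with $w_1,w_3$ blowing up, where a comparison argument of the type used in Lemma in Section~\ref{sec:SU2} (comparing $w_3'$ with $w^2$ or $\gamma w^2$) shows $\eta$ may be finite but the length integral $\int^\eta w_1\sqrt{w_3}\,dt$ of an orbit-constant curve diverges because the integrand behaves like $(\eta-t)^{-3/2}$ (or one checks directly that the end at $\eta$ is at infinite distance). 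Completeness then follows by combining: the $-\infty$ end closes up smoothly via the bolt, the $\eta$ end is at infinite distance, and the Cauchy-Schwarz estimates in the orbit directions (as in Section~\ref{sec:heis}) rule out escape in finite distance in any other direction. The main obstacle I anticipate is the smooth extension across the bolt at $\xi=-\infty$: because the linearization at $(q,0,q,0)$ has a \emph{double} positive eigenvalue, one must be careful in identifying the correct unstable manifold and the leading asymptotics, and the Verdiani-Ziller parity/integrality bookkeeping for the triaxial metric — where $a$, $b$, $c$ are genuinely distinct — is more delicate than in the biaxial $SU(2)$ and Heisenberg cases.
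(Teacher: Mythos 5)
Your overall architecture matches the paper's: rule out complete metrics when the left endpoint $\xi$ is finite, identify the good solutions as the unstable curves of $(q,0,q,0)$, attach a bolt at $t=-\infty$ via the Verdiani--Ziller parity conditions (with $b$ odd, $a,c,\alpha$ even, $b'(0)=1$), and show the $\eta$-end is at infinite distance. However, your very first reduction contains a concrete error that propagates. From $(ac)'=ac\,\alpha$ and $\alpha'=\alpha c^2$ you cannot conclude that $\log(ac/\alpha)$ is constant: its derivative is $\alpha-c^2$, which does not vanish in general. The conserved quantity pairs $\alpha$ with $ab$, not $ac$: since $(ab)'=abc^2$ (Lemma~\ref{prelim}) and $\alpha'=\alpha c^2$, one gets $\alpha=k\,ab$ for a constant $k>0$. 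Consequently your modified third equation $c'=\tfrac c2(a^2-c^2+2e^{-A}ac)$ is not the correct reduced system; it should read $c'=\tfrac c2(a^2-c^2+2k\,ab)$.

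This matters beyond bookkeeping, because the relation $\alpha=k\,ab$ is exactly what drives the completeness argument at the $\eta$-end. The $E(2)$ metrics here are genuinely triaxial, so the biaxial $w_1=w_2$ reduction and the $(\eta-t)^{-3/2}$ asymptotics you import from the $SU(2)$ section do not apply. The paper instead substitutes $r=2(ab)^{1/2}$ (after proving $ab$ is unbounded, Lemma~\ref{ab-unbd}, which itself uses $\alpha=kab$), writes the metric as
\begin{equation*}
g=W^{-1}dr^2+\frac{r^2}4\left(V\sigma_1^2+V^{-1}\sigma_2^2+W\sigma_3^2\right),\qquad W=\frac{c^2}{ab},\ V=\frac ab,
\end{equation*}
and uses $\alpha=k r^2/4$ to show $W\to L/2+k>0$ as $r\to\infty$, so that $\int W^{-1/2}\,dr=\infty$ (Proposition~\ref{prop:estimates}). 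With your relation $\alpha=e^{-A}ac$ this computation cannot be carried out, and you have supplied no substitute for it. Two smaller points: the paper also needs the invariant region $0\le c^2-a^2\le 2\alpha$ to characterize which trajectories have $\xi=-\infty$ and to exclude the equilibria $(0,p,0,r)$ (again via $\alpha=kab$); and at the bolt no integrality constraint arises in the $E(2)$ case --- the weights are $a_1=d_1=1$ and $b'(0)=1$ is verified directly from \Ref{rODEb}, so smooth extension holds unconditionally, unlike the $SU(2)$ case.
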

\begin{proof}
The proof is broken into three steps.
As in the case of $SU(2)$, solutions with a maximal interval
having a finite left endpoint do not yield complete metrics. See
Proposition~\ref{prop:solutions}. Solutions with maximal interval $(-\infty,\eta)$ are the unstable curves of the equilibrium points $(q,0,q,0)$, and satisfy $0\le c^2-a^2 \le 2\al$.
Once again for a geodesic orthogonal to the orbits, $\eta$ is infinitely far, while
$t=-\infty$ is at a finite distance. See Proposition~\ref{prop:estimates}. At $t=-\infty$
the metric nd K\"ahler form extend smoothly (Proposition~\ref{prop:bolt}). The proof that all finite length curves remain inside some compact set is as in \cite{mr}.
\end{proof}

We first record in a lemma some relations,  easily verifiable via \Ref{E2ODEa}-\Ref{E2ODEc}, which will
be used later in the proof.
\begin{lemma}\lb{prelim}
For the system \Ref{E2ODEa}-\Ref{E2ODEd},
\begin{align*}
(ab)'&=abc^2,\qquad (\al c)'=\fr{\al c (a^2+c^2+2\al)}2, \\
(bc)'&=bc\left(a^2+\al\right),\qquad  \left(\frac{a}{b}\right)'=-\frac{a^3}{b},\\
(ac)'&=ac\al, \\
\end{align*}
\end{lemma}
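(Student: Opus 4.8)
The final statement to prove is Lemma~\ref{prelim}, which asserts a list of seven product/quotient derivative identities for the $E(2)$ system \Ref{E2ODEa}--\Ref{E2ODEd}. The plan is simply to apply the product and quotient rules to each expression and substitute the defining equations of the system; nothing deeper is needed, since the right-hand sides of \Ref{E2ODEa}--\Ref{E2ODEd} are polynomial in $a,b,c,\al$.

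Concretely, I would proceed term by term. For $(ab)'=a'b+ab'$, substitute \Ref{E2ODEa} and \Ref{E2ODEb} to get $\tfrac{ab}{2}(-a^2+c^2)+\tfrac{ab}{2}(a^2+c^2)=abc^2$. For $(\al c)'=\al' c+\al c'$, use \Ref{E2ODEd} and \Ref{E2ODEc}: $\al c^3+\tfrac{\al c}{2}(a^2-c^2+2\al)=\tfrac{\al c}{2}(2c^2+a^2-c^2+2\al)=\tfrac{\al c}{2}(a^2+c^2+2\al)$. For $(bc)'=b'c+bc'$, substitute \Ref{E2ODEb} and \Ref{E2ODEc}: $\tfrac{bc}{2}(a^2+c^2)+\tfrac{bc}{2}(a^2-c^2+2\al)=bc(a^2+\al)$. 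For $(a/b)'=(a'b-ab')/b^2$, substitute \Ref{E2ODEa} and \Ref{E2ODEb}: $\tfrac{1}{b^2}\bigl(\tfrac{ab}{2}(-a^2+c^2)-\tfrac{ab}{2}(a^2+c^2)\bigr)=\tfrac{a}{2b}(-2a^2)=-a^3/b$. Finally $(ac)'=a'c+ac'$, using \Ref{E2ODEa} and \Ref{E2ODEc}: $\tfrac{ac}{2}(-a^2+c^2)+\tfrac{ac}{2}(a^2-c^2+2\al)=ac\al$. All five displayed identities thus follow by direct computation.

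Since there is essentially no obstacle here — the lemma is recorded precisely because it bundles routine bookkeeping that will be invoked repeatedly in the three propositions of the next subsections — the only care needed is in the algebraic cancellations, all of which are immediate. I would therefore simply state that the identities are obtained by differentiating each product (or quotient) and substituting \Ref{E2ODEa}--\Ref{E2ODEd}, leaving the one-line computations to the reader, exactly as the phrase ``easily verifiable via \Ref{E2ODEa}-\Ref{E2ODEc}'' in the lemma's statement already signals.
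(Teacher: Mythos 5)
Your proposal is correct and matches the paper's treatment exactly: the paper offers no more than the remark that these identities are ``easily verifiable via \Ref{E2ODEa}--\Ref{E2ODEc},'' and your term-by-term application of the product and quotient rules with substitution of the system is precisely that verification, with all five cancellations carried out correctly. (The only slip is calling them ``seven'' identities in your opening sentence; there are five.)
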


\subsection{Solutions}
\begin{prop}\label{prop:solutions}
There are no complete metrics corresponding to solutions of \Ref{E2ODEa}-\Ref{E2ODEd} with maximal interval $(\xi,\eta)$, when $\xi$ is finite. Furthermore, the unstable curves of the equilibrium points $(q,0,q,0)$ are non-equilibrium solutions with maximal interval $(-\infty,\eta)$ which satisfy $0\le c^2-a^2\le 2\al$.
\end{prop}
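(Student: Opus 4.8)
The plan is to mirror the structure of the $SU(2)$ argument (Proposition in \S\ref{sec:SU2}), since the $E(2)$ system \Ref{E2ODEa}-\Ref{E2ODEd} differs from \Ref{SU2-ode} only in that $p_2=0$, which kills some symmetry but actually simplifies the monotonicity picture. First I would record the basic monotonicity facts from Lemma~\ref{prelim}: in the region where $a,b,c,\al>0$, the product $ab$ is increasing, $bc$ is increasing, $ac$ is increasing, and $(a/b)'=-a^3/b<0$; moreover $\al'=\al c^2>0$ so $\al$ is increasing, and from \Ref{E2ODEb} $b$ is strictly increasing. A uniqueness argument shows that a trajectory starting in the region $\{a,b,c,\al>0\}$ stays there. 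As in the $SU(2)$ case, since $ab,bc,ac,\al$ are all increasing, each has a finite nonnegative limit as $t\searrow\xi$, and at most finitely many of $a,b,c$ can be decreasing on an interval.

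Next I would handle the case $\xi>-\infty$ and show it forces incompleteness. If $b$ had a nonzero limit at $\xi$, then since $ab$ and $bc$ have finite limits, $a$ and $c$ would too, and $\al$ does by monotonicity, so the whole solution extends continuously past $\xi$, contradicting maximality; hence $b(t)\to 0$. Now $b$ is strictly increasing throughout, so the ``blow-up'' must come from $a$ or $c$. Splitting on whether $a'<0$ somewhere or $c'<0$ somewhere (arguing as in the $SU(2)$ proof that the sign is preserved backward in $t$ by computing the derivative of $-a^2+c^2$, respectively of $a^2-c^2+2\al$, and checking it is positive wherever the quantity is negative — here $\frac{d}{dt}(-a^2+c^2)=a^4-c^4+\al c^2$ type expressions, and one must use $\al>0$), one deduces in the first sub-case $a\to\infty$, then from $ac$ finite that $c\to 0$, leading to the asymptotic system $a'\approx -a^3/2$, which blows up at a finite backward time — contradiction; and similarly, if instead all of $a,b,c$ are eventually increasing one gets a continuous extension, again a contradiction. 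In the remaining sub-case $c\to\infty$ one uses $ac$ finite to get $a\to0$ and the asymptotic equation $c'\approx -c^3/2$ for a contradiction. The upshot is $\xi=-\infty$.

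Then I would identify the $\xi=-\infty$ trajectories with unstable curves of $(q,0,q,0)$. Since $b$ is increasing it has a limit as $t\to-\infty$; if that limit is positive then (by the increasing products) $a,c$ have positive limits and $\al$ has a limit, forcing convergence to an equilibrium — but the only equilibria with $b\ne 0$ are $(0,p,0,r)$, which has $a=c=0$, contradicting $a,c>0$ having positive limits unless we are actually in the case $b\to 0$; more carefully, comparing with the equilibrium list $(q,0,q,0)$ and $(0,p,0,r)$ and using $a,c>0$, one rules out $(0,p,0,r)$, so $b\to 0$ after all. With $b\to0$, the sub-case analysis (now on the infinite interval $(-\infty,\eta)$, using the same derivative-sign computations) shows the only non-contradictory possibility is that $a,c$ and $\al$ all have finite limits and the trajectory converges to $(q,0,q,0)$; the cases where $a$ or $c$ is eventually decreasing again produce finite-time backward blow-up via the asymptotic equations $a'\approx -a^3/2$ or $c'\approx -c^3/2$. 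By the Center Manifold Theorem applied at $(q,0,q,0)$ with $q>0$ — whose linearization, as computed just above Theorem~\ref{thm4}, has a double positive, one negative, and one zero eigenvalue — the positive-eigenvalue direction gives exactly the unstable curve, and these are precisely the non-equilibrium trajectories with $\xi=-\infty$.

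Finally I would establish the inequality $0\le c^2-a^2\le 2\al$ along these unstable curves. From $(a/b)'=-a^3/b<0$ and $b$ increasing with $b\to0$ as $t\to-\infty$, standard comparison forces $a\to0$ as $t\to-\infty$ as well (otherwise $a/b\to\infty$ is inconsistent with the convergence to $(q,0,q,0)$, which gives $a\to q$; indeed since $a\to q$ and $b\to0$, $a/b\to+\infty$, and $(a/b)'<0$ everywhere is consistent with this only when read forward — one instead uses that $a^2-c^2\to q^2-q^2=0$ directly). The cleaner route: set $u=c^2-a^2$; then $u\to 0$ as $t\to-\infty$, and a direct computation from \Ref{E2ODEa},\Ref{E2ODEc} gives $u'=\frac{d}{dt}(c^2-a^2)$ as a quantity I can arrange in the form $u'=(\text{something})\,u+2\al c^2$ or similar, which, combined with $u(-\infty)=0$ and $\al,c^2\ge 0$, yields $u\ge 0$ by an integrating-factor/Gronwall argument; for the upper bound, consider $v=2\al-u=2\al-c^2+a^2$, show $v\to 0$ as $t\to-\infty$ (using $\al\to0$, which follows since $\al=e^{-A}ab\to 0$ because $b\to0$ and $a\to q$ is bounded — here I use the first integral $\al=e^{-A}(ab)$ analogous to the $SU(2)$ reduction, valid since $(\log(ab/\al))'=0$ from Lemma~\ref{prelim}), and compute $v'$ to show $v\ge0$ by the same Gronwall-type argument. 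I expect this last step — pinning down the signs in $u'$ and $v'$ and checking the boundary behavior at $-\infty$ rigorously rather than just asymptotically — to be the main technical obstacle, since it requires the exact first integral $\al=e^{-A}ab$ and careful control of all four functions simultaneously near the equilibrium; the finite-$\xi$ incompleteness part is routine given the $SU(2)$ template.
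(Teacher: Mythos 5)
Your overall case division is essentially the paper's: splitting on whether $a'<0$ somewhere (equivalently $c^2-a^2<0$) or $c'<0$ somewhere (equivalently $c^2-a^2>2\al$), and showing these signs propagate backward in $t$ via $\frac{d}{dt}(c^2-a^2)=-(c^2-a^2)(c^2+a^2)+2\al c^2$ and $\frac{d}{dt}(a^2-c^2+2\al)=c^4-a^4$, is exactly the paper's Cases 1 and 2, and your Gronwall/integrating-factor route to $0\le c^2-a^2\le 2\al$ is a workable variant of the paper's argument (which obtains the inequality as the contrapositive of Cases 1 and 2). However, there is a genuine logical gap in your treatment of finite $\xi$. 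In the sub-case $a'<0$ somewhere you correctly deduce $a\to\infty$, $b,c\to0$ and the asymptotics $a\simeq(t-\xi)^{-1/2}$, $b,c\simeq\mathrm{const}\cdot(t-\xi)^{1/2}$ --- but this is \emph{not} a contradiction: it is a perfectly consistent scenario (it occurs for every initial value with $c_0^2<a_0^2$), and it is precisely the situation the first assertion of the proposition addresses. Your conclusion that ``the upshot is $\xi=-\infty$'' is therefore false, and, more importantly, you never prove the actual claim, namely that these finite-$\xi$ solutions yield \emph{incomplete} metrics. The missing step is the length computation: from the asymptotics, $\int_{\xi}^{t_0}abc\,dt\simeq\int_{\xi}^{t_0}(t-\xi)^{1/2}\,dt<\infty$, so a curve transverse to the orbits reaches $t=\xi$ in finite length. (A finite-time backward blow-up is a contradiction only in the second half of the argument, where $\xi=-\infty$ has already been established; your use of it there is correct.)

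A smaller but real gap concerns ruling out convergence to the equilibria $(0,p,0,r)$ as $t\to-\infty$: you assert that $a$ and $c$ ``have positive limits,'' which is unjustified, since positivity of $a,c$ on the interval does not prevent their limits from being $0$. The paper's argument is that $a/b$ is positive and decreasing, so its limit as $t\to-\infty$ is its supremum and cannot be $0$, excluding $p>0$; for $p=0$, $r>0$ it uses the first integral $\al=kab$ (which you do invoke later) to derive a contradiction from $a/\al\to0$ forcing $1/b\to 0$. You should supply one of these. The remaining ingredients --- the monotonicity facts from Lemma~\ref{prelim}, the backward sign-propagation, and the identification of the $\xi=-\infty$ trajectories with unstable curves of $(q,0,q,0)$ via the center manifold theorem --- are sound and match the paper.
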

\begin{proof}
For an initial time $t_0$, let $(\xi,\eta)$ be a maximal solution interval for the initial value problem for \Ref{E2ODEa}-\Ref{E2ODEd} with
$a(t_0)=a_0$, $b(t_0)=b_0$, $c(t_0)=c_0$ and $\al(t_0)=\al_0$.

Uniqueness of solutions to \Ref{E2ODEa}-\Ref{E2ODEd} implies that if any of
$a$, $b$, $c$ or $\al$ are zero anywhere in $(\xi,\eta)$ then they are zero everywhere.
Accordingly we assume that $a$, $b$, $c$ and $\al$ are all positive on $(\xi,\eta)$.
Then we see from Lemma~\ref{prelim} and \Ref{E2ODEb} that $ab$, $bc$, $ac$, and $b$ are all
increasing on $(\xi,\eta)$.

We consider the following cases:
\subsubsection*{Case 1: $c_0^2-a_0^2<0$} We first make the following claim.\\[4pt]
Claim: In this case $a\to\infty$ as $t\to\xi^+$.\\[3pt]
\textit{Proof of claim:} Since
\[ (c^2-a^2)'=-(c^2-a^2)(c^2+a^2)+2\al c^2, \]
if $c^2-a^2<0$ then $(c^2-a^2)'>0$, thus $c^2-a^2<0$ for all $\xi<t<t_0$.
Therefore,
\begin{align*}
a' &= \frac{a}{2}(c^2-a^2), \\
a'' &= \frac{a}{4}[(c^2-a^2)^2-2(c^2-a^2)(c^2+a^2)+4\al c^2],
\end{align*}
showing that $a$ is decreasing and concave up on $(\xi,t_0)$.
Next, we always have $b'>0$, while on $(\xi,t_0)$
\[c'=\frac{c}{2}(a^2-c^2+2\al)>0, \]
i.e. $c$ is increasing on $(\xi,t_0)$.
Therefore $b$ and $c$ are bounded on $(\xi,t_0)$.
Thus, as $(\xi,\eta)$ is the maximal solution interval,
$a$ could be bounded as $t\to\xi^+$ only if $\xi=-\infty$.
But since $a$ is concave up, $a\to\infty$ as $t\to\xi^+$ even when $\xi=-\infty$. \qed

Since $ab$ and $ac$ are increasing, they are bounded as $t\to\xi^+$ and $a\to\infty$, so $b\to0$, $c\to0$. Now $\al$ is also increasing, so $\al\to k$ for some constant $k$ as $t\to\xi^+$.
Then as $t\to\xi^+$ the first three equations will take the asymptotic form
\begin{align*}
a'&=-\frac{1}{2}a^3 \\
b'&=\frac{1}{2}ba^2 \\
c'&=\frac{1}{2}c(a^2+2k) \\
\end{align*}
the solution of which has asymptotic form
\begin{align*}
a&\simeq (t-\xi)^{-\frac{1}{2}},\\
b&\simeq b_1(t-\xi)^{\frac{1}{2}},\\
c&\simeq c_1(t-\xi)^{\frac{1}{2}},\\
\end{align*}
for some constants $b_1$ and $c_1$.
This shows that $\xi$ is finite in this case and
\[ \int_{\xi}^{t_0} abc\,dt <\infty,\]
so the metric is not complete.

\subsubsection*{Case 2: $c_0^2-a_0^2>2\al_0$} Here we have a similar claim.\\[4pt]
Claim: In this case $c\to\infty$ as $t\to\xi^+$.\\[3pt]
\textit{Proof of claim:} Analogous to the previous claim.\qed

Since $ac$, $bc$ and $\al c$ are increasing (see Lemma~\ref{prelim}),
they are bounded as $t\to\xi^+$ and $c\to\infty$, so $a\to0$, $b\to0$ and $\al\to 0$.
Then as $t\to\xi^+$ the equations take the asymptotic form
\begin{align*}
a'&=\frac{1}{2}ac^2 \\
b'&=\frac{1}{2}bc^2 \\
c'&=-\frac{1}{2}c^3 \\
\end{align*}
which has solution
\begin{align*}
a&\simeq a_1(t-\xi)^{\frac{1}{2}}\\
b&\simeq b_1(t-\xi)^{\frac{1}{2}}\\
c&\simeq (t-\xi)^{-\frac{1}{2}}\\
\end{align*}
for some constants $a_1$ and $b_1$.
This shows that $\xi$ is finite in this case and
\[ \int_{\xi}^{t_0} abc\,dt <\infty,\]
so the metric is not complete.

If $c^2-a^2<0$ or $c^2-a^2>2\al$ at any time, then a constant shift in $t$ will give one of the previous cases.
In both previous cases, $\xi$ is finite, but we know that the unstable curve of the equilibrium points $(q,0,q,0)$ must have $\xi=-\infty$.
The existence of these curves is guaranteed by the center manifold theorem.
Therefore we consider the final case:

\subsubsection*{Case 3: $0\le c^2-a^2 \le 2\al\textnormal{ for all }t\in(\xi,\eta)$} Here we
have a different claim.\\[4pt]
Claim: In this case $\xi=-\infty$. \\[3pt]
\textit{Proof of claim:}
In this case $a$, $b$, and $c$ are all increasing, therefore they are all bounded on $(\xi,t_0)$.
Since $(\xi,\eta)$ is the maximal solution interval $\xi=-\infty$. \qed

As $a$, $b$, $c$ and $\al$ are all increasing, it must be that they all approach finite non-negative limits as $t\to-\infty$.
Thus $(a,b,c,\al)$ must approach an equilibrium point.
If $(a,b,c,\al)\to(0,p,0,r)$ with $p>0$, then $a/b\to 0$ as $t\to-\infty$, but $a/b$ is decreasing and positive (see Lemma~\ref{prelim}), so this cannot happen. On the other hand, if $r>0$ and $p=0$,
note first that from \Ref{E2ODEa}-\Ref{E2ODEb} it easily follows that $\al=kab$ for some constant $k$ which is positive for a non-equilibrium solution. Then, as  $a/\al$ approaches $0$ as $t\to-\infty$, so does $1/b$, but $1/b$ approaches $\infty$, which is a contradiction.

Therefore, when $t\to-\infty$ we see that $(a,b,c,\al)\to(q,0,q,0)$.
\end{proof}

Note that we did not rule out the possibility that $q=0$. However, power series calculations
show at least that there are no non-equilibrium trajectories approaching $(0,0,0,0)$ which
are analytic, in an appropriate sense, at $t=-\infty$. From now on we will only consider the case $q>0$. The center manifold theorem guarantees that solutions exist and are defined over a maximal interval with left endpoint $-\infty$, while the above proof shows that $a$ and $c$, along with $b$ and $\al$ are non-decreasing on this interval.

We will need a one more property of the solutions in Case 3.
\begin{lemma}\lb{ab-unbd}
In Case 3 above, $ab$ is unbounded from above.
\end{lemma}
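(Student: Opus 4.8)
The plan is a short proof by contradiction, with the only branching being on whether the right endpoint $\eta$ of the maximal interval is finite. Recall that in Case~3 we have $\xi=-\infty$, and by Lemma~\ref{prelim} $(ab)'=ab\,c^2>0$ on $(-\infty,\eta)$, so $ab$ is strictly increasing; since moreover $ab\to 0$ as $t\to-\infty$, the statement ``$ab$ unbounded from above'' is equivalent to $ab\to\infty$ as $t\to\eta^-$, and it suffices to exclude the possibility that $ab$ converges to a finite limit.

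Fix a base time $t_0$. In Case~3 the hypothesis $c^2-a^2\ge 0$ forces $a'=\frac a2(c^2-a^2)\ge 0$, so $a$ is non-decreasing and $a(t)\ge a_0:=a(t_0)>0$ for $t\ge t_0$. If $\eta=\infty$ this already settles the matter without any contradiction: $(\log ab)'=c^2\ge a^2\ge a_0^2$, so $\log ab$ grows at least linearly and $ab\to\infty$ as $t\to\infty$.

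So suppose $\eta<\infty$; I would then argue by contradiction, assuming $ab\le L<\infty$ on $[t_0,\eta)$. Since $b'>0$ by \Ref{E2ODEb} we have $b(t)\ge b_0:=b(t_0)>0$, hence $a=ab/b\le L/b_0$ and $b=ab/a\le L/a_0$ are bounded. The relations $(ab)'=ab\,c^2$ and $\al'=\al c^2$ give $(\al/ab)'=0$, so $\al=k\,ab$ with $k>0$; thus $\al$ is bounded, and then the Case~3 bound $c^2\le a^2+2\al$ shows $c$ is bounded as well. Consequently $a,b,c,\al$ and their derivatives --- polynomials in $a,b,c,\al$ --- are bounded on $[t_0,\eta)$, so the solution extends continuously to $t=\eta$ and, since the right-hand side of \Ref{E2ODEa}-\Ref{E2ODEd} is locally Lipschitz, past it, contradicting the maximality of $(-\infty,\eta)$.

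The argument uses only the monotonicity facts $a'\ge 0$ (Case~3), $b'>0$ and $(ab)'>0$, the a priori inequality $c^2\le a^2+2\al$ defining Case~3, and the first integral $\al=k\,ab$, all of which are immediate. I do not anticipate a real obstacle; the only step needing a little care is the finite-$\eta$ branch, where one must check that boundedness of the four functions actually forces continuation past $\eta$ --- but this is the same continuation-of-solutions mechanism already used repeatedly in Proposition~\ref{prop:solutions}.
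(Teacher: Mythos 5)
Your proof is correct, and while it shares the paper's overall skeleton (split on whether $\eta$ is finite, exploit the monotonicity from Lemma~\ref{prelim} and the first integral $\al=k\,ab$), the key estimate is genuinely different and somewhat cleaner. For $\eta=\infty$ the paper first integrates $(c^2)'\le 2\al'$ to get the auxiliary inequality $2\al\ge c^2-q^2$ \Ref{al-c}, then must argue that $a$ is not identically $q$ so that $(\log c^2)'\ge a^2-q^2$ is eventually bounded below by a positive constant, and only then transfers the growth of $c^2$ to $\al$ and hence to $ab$. You bypass all of this by reading the lower bound $c^2\ge a^2$ directly off the Case~3 hypothesis and feeding it into $(\log ab)'=c^2\ge a(t_0)^2>0$, which gives exponential growth of $ab$ with no detour through $c$ or $\al$ and no need to exclude $a\equiv q$. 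For $\eta<\infty$ the paper argues forward (one of the four increasing functions must blow up, and each case forces $ab\to\infty$), whereas you argue the contrapositive via the standard continuation criterion; these are equivalent, and your version has the small merit of not needing to enumerate which function blows up. Both arguments are sound; yours buys a shorter and more self-contained proof, at the modest cost of not producing the inequality \Ref{al-c}, which the paper does not reuse elsewhere anyway.
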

\begin{proof}
By \Ref{E2ODEc} and \Ref{E2ODEd}
\[
(c)'=\fr c2(a^2-c^2+2\al)\le \fr c2 2\al=\fr c2 \fr{2\al'}{c^2}=\fr{2\al'}{2c}
\]
so $(c^2)'\le 2\al'$ or $c^2|_s^t\le 2\al|_s^t$. Taking $s\to-\infty$ gives
\be\lb{al-c}
2\al\ge c^2-q^2.
\end{equation}
Applying this to \Ref{E2ODEc} gives
\[
(\log c^2)'\ge a^2-c^2+c^2-q^2=a^2-q^2.
\]
As one easily checks, as we are always assuming $\al$ is not identically zero, there is no
non-equilibrium solution with $a=q$ identically. Thus
for some $t_0$, for any $t>t_0$, $a(t)\ge a(t_0)>q$, so on that domain $(\log c^2)'>\epsilon>0$.
Thus $c^2$ grows faster than exponentially, and hence so does $2\al$ by \Ref{al-c}.
And $\al=kab$, $k>0$. This proves the result if $\eta=\infty$. If $\eta$ is finite, one of
$a$, $b$, $c$, $\al$ is unbounded and they are all increasing, which proves the claim if
it is $a$ or $b$ that are unbounded. If it is $c$, $\al$ is also unbounded by \Ref{al-c} again.
\end{proof}

\begin{prop}\label{prop:estimates}
Let $g$ be a Riemannian metric of the form \Ref{bianchiAmet} on an $E(2)$-manifold $M$, with $a$, $b$, $c$ a solution to \Ref{E2ODEa}-\Ref{E2ODEd} along an unstable curve of an equilibrium point $(q,0,q,0)$,
$q>0$, having maximal domain $I=(-\infty,\eta)$. Assume that the latter interval is also the range
of the coordinate function $t$ on $M$. For a point $p_0\in M$ with orbit through $p_0$ of principal
type and a level set $M^t$ of $t$,
\[
\lim_{t\to -\infty}d_g(p_0,M^t)<\infty,\qquad
\lim_{t\to \eta}d_g(p_0,M^t)=\infty,
\]
where $d_g$ is the distance function induced by $g$.
\end{prop}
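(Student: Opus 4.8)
The plan is to reduce both limits to improper integrals of $abc$ over the $t$-variable. Write $t_0:=t(p_0)$. The starting point is that $a,b,c$ depend only on $t$, so the positive function $abc$ on $M$ factors through the coordinate $t$; moreover $\partial_t$ is $g$-orthogonal to the orbit directions, with $|\partial_t|_g=abc$, and $|dt|_g=(abc)^{-1}$. From this I would first establish the identity
\[
d_g(p_0,M^t)=\Big|\int_{t_0}^{t}a(s)b(s)c(s)\,ds\Big|.
\]
For ``$\le$'', the integral curve of $\partial_t$ through $p_0$ remains in the principal part for every $t\in(-\infty,\eta)$ and joins $p_0$ to a point of $M^t$ with length exactly $\big|\int_{t_0}^{t}abc\,ds\big|$. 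For ``$\ge$'', given any piecewise smooth curve $\gamma$ from $p_0$ to $M^t$, Cauchy--Schwarz gives $|\gamma'|_g\ge (abc)\,|(t\circ\gamma)'|$ pointwise; since $abc\ge0$ and the range of $t\circ\gamma$ contains the interval between $t_0$ and $t$, integrating and changing variables yields $L(\gamma)\ge\big|\int_{t_0}^{t}abc\,ds\big|$. Consequently the proposition is equivalent to the two statements $\int_{-\infty}^{t_0}abc\,dt<\infty$ and $\int_{t_0}^{\eta}abc\,dt=\infty$.

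For the first statement, along an unstable curve of $(q,0,q,0)$, $q>0$, one has $a\to q$, $b\to 0$, $c\to q$ as $t\to-\infty$, hence $ab\to 0$. By Lemma~\ref{prelim}, $(ab)'=abc^2$, so $\int_{-\infty}^{t_1}abc^2\,dt=ab(t_1)<\infty$ for any $t_1$. Choosing $t_1$ small enough that $c\ge q/2$ on $(-\infty,t_1]$, we get $abc\le\tfrac{2}{q}\,abc^2$ there, whence $\int_{-\infty}^{t_1}abc\,dt<\infty$; the remaining piece $\int_{t_1}^{t_0}abc\,dt$ is finite by continuity.

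For the second statement I distinguish two cases. If $\eta=\infty$, then by Proposition~\ref{prop:solutions} (Case 3) $a,b,c$ are non-decreasing, so $abc\ge a(t_0)b(t_0)c(t_0)>0$ and $\int_{t_0}^{\infty}abc\,dt=\infty$ at once. If $\eta<\infty$, I use the first integral $\al=k\,ab$ with $k>0$ together with $(ac)'=ac\,\al$ from Lemma~\ref{prelim}, which gives $\tfrac{1}{a}(ac)'=k\,abc$. Integrating over $(t_0,T)$ by parts,
\[
k\int_{t_0}^{T}abc\,dt=c(T)-c(t_0)+\int_{t_0}^{T}\frac{c\,a'}{a}\,dt .
\]
The last integral is nonnegative because $a$ is non-decreasing, and $c(T)\to\infty$ as $T\to\eta$: indeed $ab$ is unbounded by Lemma~\ref{ab-unbd} and non-decreasing, hence $ab\to\infty$, while $(\log ab)'=c^2$, so $\int_{t_0}^{\eta}c^2\,dt=\infty$; as $\eta-t_0<\infty$ this forces the non-decreasing function $c$ to diverge. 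Letting $T\to\eta$ then gives $\int_{t_0}^{\eta}abc\,dt=\infty$.

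Most of this is routine: the distance formula is a first-variation computation, and the needed identities are collected in Lemma~\ref{prelim}. The one genuinely delicate point, which I expect to be the main obstacle, is the divergence of the integral when $\eta$ is finite, where one must verify that $c$ actually blows up as $t\to\eta$ --- and this is precisely where Lemma~\ref{ab-unbd} and the relation $(\log ab)'=c^2$ enter.
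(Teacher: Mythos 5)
Your proposal is correct, and while it agrees with the paper on the preliminary reduction --- both establish the identity $d_g(p_0,M^t)=\big|\int_{t_0}^{t}abc\,ds\big|$, cf.\ \Ref{dist-def} --- it diverges on both endpoints. For $t\to-\infty$ the paper simply defers to \cite{mr}; your argument via $(ab)'=abc^2$ and $c\to q>0$, giving $abc\le\tfrac2q\,abc^2$ with $\int_{-\infty}^{t_1}abc^2\,dt=ab(t_1)-\lim_{t\to-\infty}ab<\infty$, is a clean self-contained substitute. For the $\eta$ endpoint the paper changes variables to $r=2(ab)^{1/2}$, shows $r\to\infty$ using Lemma~\ref{ab-unbd}, and analyzes the asymptotic ODE for $W=c^2/(ab)$ to conclude $W\to L/2+k$ and hence $\int W^{-1/2}\,dr=\infty$; this buys the explicit asymptotic form \Ref{gWV} of the metric but rests on an informal asymptotic-solution step. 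Your route instead splits on whether $\eta$ is finite: if $\eta=\infty$ the monotonicity from Case 3 of Proposition~\ref{prop:solutions} gives $abc$ bounded below; if $\eta<\infty$ you combine the first integral $\al=kab$, $k>0$, with $(ac)'=ac\,\al$ from Lemma~\ref{prelim} to get $k\int_{t_0}^{T}abc\,dt=c(T)-c(t_0)+\int_{t_0}^{T}\tfrac{c\,a'}{a}\,dt\ge c(T)-c(t_0)$, and then force $c(T)\to\infty$ from $\int_{t_0}^{\eta}c^2\,dt=\lim_{T\to\eta}\log\tfrac{ab(T)}{ab(t_0)}=\infty$ (Lemma~\ref{ab-unbd} again) together with $\eta-t_0<\infty$ and the monotonicity of $c$. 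All the ingredients you invoke ($a'\ge0$ in Case 3, positivity of $k$, monotonicity and unboundedness of $ab$) are available at this point in the paper, so the argument closes; it is more elementary and arguably tighter than the paper's asymptotic analysis, at the cost of not exhibiting the limiting metric near $\eta$.
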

\begin{proof}
As in \cite{mr} we note that
the level sets of $t$ are orbits of $\mathcal{G}$ and for $t_0=t(p_0)$
\[d_g(p_0,M^{t_1}) = d_g(M^{t_0},M^{t_1}), \]
measures the distance in the quotient manifold $\tilde{M}/\mathcal{G}$, where
\be\lb{dist-def} d_g(M^{t_0},M^{t_1}) = \left|\int_{t_0}^{t_1} abc dt\right|, \end{equation}
and the metric is $(abc)^2dt^2$.

We omit the proof that $\lim_{t\to -\infty}d_g(p_0,M^t) < \infty$ as it is identical to that in \cite{mr}, and also similar to the case of $SU(2)$.
To understand the behavior at the $\eta$ side of the solution interval, we adopt the change of
variable $r=2(ab)^{1/2}$ first appearing in \cite{pp1} and \cite{d-s1}, which is allowable as $ab$ is strictly increasing (Lemma~\ref{prelim}). $r\to\infty$ as $t\to\eta$ since otherwise $ab$ is bounded, contradicting Lemma~\ref{ab-unbd}. Using Lemma~\ref{prelim}, the metric after this change
takes the form
\be\lb{gWV}
g=W^{-1}dr^2+\fr{r^2}4(V\sig_1^2+V^{-1}\sig_2^2+W\sig_3^2)
\end{equation}
with $W=c^2/(ab)$ and $V=a/b$. Additionally,
\begin{align*}
\fr{dW}{dr}=W'/r'&=\Big(\fr{c^2}{ab}\Big)'(ab)^{-1/2}c^{-2}\\
&=-\fr 4rW+\fr 2r\fr ab+16\fr{\al}{r^3}.
\end{align*}
Now $a/b$ decreases to a finite nonnegative limit $L$ as $r\to\infty$, so
that asymptotically
\[
\fr{dW}{dr}= -\fr 4rW+\fr 2rL+16\fr{\al}{r^3},
\]
an equation which, using the aforementioned relation $\al=kab=k\fr{r^2}4$, $k>0$ constant,
has solution
\[
W=L/2+k+\fr p{r^4}
\]
for an integration constant $p$.
The metric then has the asymptotic form \Ref{gWV} for $W$ as above and $V=L$. If $L=0$ this asymptotic
form is degenerate, but nonetheless one can still use its $dr^2$ component to
compute the distance to $M_r:=M_{t(r)}$. Thus the integral of $W^{-1/2}$ in this asymptotic
form shows that
\be\lb{eta-dist}
\lim_{r\to\infty}d_g(p_0,M_r)=\infty.
\end{equation}
This completes the proof.
\end{proof}

\subsection{Smooth extension to a singular orbit}
For the case at hand, the cohomogeneity one $4$-manifold  with one singular orbit attached can be described as
\[
E(2)\times_{SO(2)}\mathbb{R}^2= (0,\infty)\times E(2)\ \amalg\ \{0\}\times \mathbb{R}^2 ,
\]
where the right $SO(2)$-action is $(g, (T,x))\to (Tg,g^{-1}x)$.
\begin{prop}\label{prop:bolt}
The metric and K\"ahler form corresponding to solutions of \Ref{E2ODEa}-\Ref{E2ODEd} along the unstable curves of the equilibrium points $(q,0,q,0)$, $q>0$, defined on $(-\infty, \eta)$,
can be smoothly extended to $M=E(2)\times_{SO(2)}\mathbb{R}^2$, with the two-dimensional singular orbit $E(2)/SO(2)$ defined over $\xi=-\infty$.
\end{prop}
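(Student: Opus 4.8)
The plan is to follow the same pattern used for the other two groups (Sections~\ref{sec:heis} and \ref{sec:SU2}): pass to the arclength coordinate transverse to the orbits, read off the asymptotics of the frame coefficients at the singular end, identify the isotropy data, and then check the Verdiani--Ziller smoothness criteria \cite{vz}, first for $g$ and then for $\om$.

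First I would set $r=\int_{-\infty}^{t}a(s)b(s)c(s)\,ds$, which is finite by Proposition~\ref{prop:estimates}. This puts the metric in the form $dr^{2}+a^{2}\sig_1^{2}+b^{2}\sig_2^{2}+c^{2}\sig_3^{2}$ with $r=0$ lying over $t=-\infty$. Dividing the right-hand sides of \Ref{E2ODEa}--\Ref{E2ODEd} by $abc$ and using the first integral $\al=k\,ab$ (with $k>0$ since the solution is not Ricci-flat) turns the system into
\[
\frac{da}{dr}=\frac{c^{2}-a^{2}}{2bc},\qquad
\frac{db}{dr}=\frac{a^{2}+c^{2}}{2ac},\qquad
\frac{dc}{dr}=\frac{a^{2}-c^{2}}{2ab}+k,\qquad
\frac{d\al}{dr}=kc .
\]
Since $(a,b,c,\al)\to(q,0,q,0)$ as $r\to0$, one gets $db/dr\to1$, so the orbit circle in the $\sig_2$--direction closes up at unit speed; and since the eigenvalues at $(q,0,q,0)$ are $q^{2}$ (double), $0$ and $-2q^{2}$, there are no resonances, so a power-series argument pins down the leading behaviour in $r$ of $a$, $c$, $\al$, $ab$ and $a^{2}-c^{2}$, which is what the smoothness conditions will be tested against.

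Next I would record the isotropy picture in the notation of \cite{vz}: the singular orbit is $E(2)/SO(2)$ with $SO(2)$ the rotation subgroup, so that $\mathfrak{k}=\mathrm{span}(X_2)$ is the collapsing direction, $\mathfrak{m}=\mathrm{span}(X_1,X_3)$ is tangent to the singular orbit, and $V=\mathrm{span}(\partial_r,X_2)$ is the normal slice. Using the brackets $[X_2,X_1]=X_3$, $[X_2,X_3]=-X_1$ together with $\lim_{r\to0}b/r=1$ I would compute the rotation numbers governing the $SO(2)$-action on $\mathfrak{m}$ and on $V$, and then verify the conditions of \cite{vz}: the $\mathrm{Ad}(SO(2))$-invariant part of $g|_{\mathfrak{m}}$ has the required parity in $r$; the anisotropic part, carried by $a^{2}-c^{2}$, vanishes to exactly the order dictated by the isotropy representation, with smooth quotient; the collapsing coefficient $b^{2}$ has the prescribed form near $r=0$; and the cross terms vanish (automatic, the metric being diagonal). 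For the K\"ahler form, which after the change of variables reads $\om=c\,dr\we\sig_3+ab\,\sig_1\we\sig_2$, I would repeat the $SO(2)$-eigenspace decomposition of $\Lambda^{2}T^{*}M$ carried out in Section~\ref{sec:SU2} and check that the coefficients appearing in the surviving eigenspaces (essentially those built from $c$ and $ab$) have the required form. Once this is done, $g$ and $\om$ extend smoothly across $E(2)/SO(2)$, which proves the proposition; combined with Propositions~\ref{prop:solutions} and \ref{prop:estimates} and the argument of \cite{mr} that finite-length curves remain in compact sets, it completes the proof of Theorem~\ref{thm4}.

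The hard part is the verification of the Verdiani--Ziller conditions, which is genuinely more delicate here than in Sections~\ref{sec:heis} and \ref{sec:SU2}: the relevant solutions are triaxial and only implicit, so the frame coefficients are not individually as simple in $r$ as in the biaxial cases, and one must show that the $SO(2)$-isotypic reorganisation of $g$ and of $\om$ --- together with the K\"ahler relation $\al=k\,ab$ and the precise leading orders of $a$, $c$, $ab$ near $r=0$ --- delivers exactly the parities and vanishing orders demanded by \cite{vz}. Pinning down that $a^{2}-c^{2}$ vanishes to the order forced by the Euclidean-group isotropy representation, with smooth quotient, is precisely where the specific structure of $E(2)$ --- as opposed, say, to the excluded equilibrium $(q,0,q)$ of $SU(2)$, whose differing circle period produced an obstruction --- is what makes the smooth extension possible.
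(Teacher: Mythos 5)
Your proposal follows essentially the same route as the paper: pass to the arclength coordinate $r=\int_{-\infty}^{t}abc\,ds$, rewrite the ODEs in $r$ (your transformed system agrees with \Ref{rODEa}--\Ref{rODEd} after substituting $\al=kab$), identify $\mathfrak{k}=\mathrm{span}\{X_2\}$, $\mathfrak{m}=\mathrm{span}\{X_1,X_3\}$ with weights $a_1=d_1=1$, verify $db/dr\to 1$ and the parity/power-series conditions $a^2+c^2=\phi_1(r^2)$, $a^2-c^2=r^2\phi_2(r^2)$ from the expansions $a=q+o(r^4)$, $c=q+o(r^4)$, $b=r+o(r^5)$, and then check the analogous conditions $cr\pm ab$ for the K\"ahler form. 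This matches the paper's proof in both structure and substance.
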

\begin{proof}
For any $E(2)$ invariant metric $g$ on $M$, with $r$ the distance along a geodesic perpendicular to the singular orbit,
\[ g = dr^2+g_r. \]
For a metric $g$ of the form \Ref{bianchiAmet}, as usual, let $r=\int_{-\infty}^t a(s)b(s)c(s)\,ds$, then
\[ g = dr^2+a^2\sigma_1^2+b^2\sigma_2^2+c^2\sigma_3^2. \]
The ODE's \Ref{E2ODEa}-\Ref{E2ODEd} in this coordinate become
\begin{align}
\label{rODEa}\frac{da}{dr}&=\frac{a}{2}\left(-\frac{a}{bc}+\frac{c}{ab}\right),\\
\label{rODEb}\frac{db}{dr}&=\frac{1}{2}\left(\frac{a}{c}+\frac{c}{a}\right),\\
\label{rODEc}\frac{dc}{dr}&=\frac{c}{2}\left(\frac{a}{bc}-\frac{c}{ab}+\fr{2\al}{abc}\right),\\
\lb{rODEd}\fr{d\al}{dr}&=\al\fr c{ab}.
\end{align}
From these it is seen that $a$, $b$, $c$ and $\al$ can be extended at $r=0$ so that $a$, $c$
and $\al$ are even and $b$ is odd, as functions of $r$.
Following the notations of Verdiani and Ziller \cite{vz}, the tangent space for $r\neq 0$ splits as
\[ T_p M = \mathbb{R}\partial_r\oplus \mathfrak{k}\oplus\mathfrak{m}, \]
where
\[\mathfrak{k}=\mathrm{span}\{X_2\}, \]
\[ \mathfrak{m}=\mathrm{span}\{X_1,X_3\}=:\ell_1, \]
and we set
\[ V=\mathrm{span}\{\partial_r,X_2\}=:\ell_{-1}'. \]
Now $\exp(\theta X_2)$ acts on both $V$ and $\mathfrak{m}$ as a rotation by $\theta$, so the weights are $a_1=d_1=1$.
The smoothness conditions for $V$ is that $b$ can be extended to an odd function and $b'(0)=1$.
Since we know that $b$ can be extended to be odd, we complete from \Ref{rODEb} the check that
\[ \left.\frac{db}{dr}\right|_{r=0}=\frac{1}{2}\left(\frac{q}{q}+\frac{q}{q}\right)=1.\]
Since $\ell'_{-1}$ and $\ell_1$ are perpendicular, the smoothness conditions in table C of
\cite{vz} are automatically satisfied, while those in
table B there, are
\begin{align}\label{smooth1}a^2+c^2&=\phi_1(r^2),\\
\label{smooth2}a^2-c^2&=r^2\phi_2(r^2),
\end{align}
for some smooth functions $\phi_1$ and $\phi_2$.
Now to see that \Ref{smooth1} is satisfied, note that
\[ a^2+c^2=2ac\frac{db}{dr}.\]
Since $a$, $c$, and $\frac{db}{dr}$ are even, it just remains to check \Ref{smooth2}.
Solving the equations in their power series expansions in $r$ gives $a=q+o(r^4)$, $c=q+o(r^4)$, so that $a^2-c^2$ has the required form.
Thus $g$ extends to a smooth metric on $M$.

The derivation that the K\"ahler form also extends smoothly proceeds as in \cite{mr},
so we just state the resulting smoothness conditions:

\begin{align*}
cr+ab &= r\phi_2(r^2), \\
cr-ab &= r^3\phi_3(r^2). \\
\end{align*}
The first of these is clear from the oddness/evenness properties of $a$, $b$, $c$.
The above Taylor series expansion of $a$, $c$, in addition to the one for $b$,
namely $b=r+o(r^5)$ easily shows that $cr-ab$ has the required form.
\end{proof}

\subsection{Completeness}
\begin{prop}\label{prop:complete}
For the metrics of Proposition~\ref{prop:bolt},
all finite length curves remain inside some compact set.
\end{prop}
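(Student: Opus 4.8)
The plan is to follow the argument for the Euclidean group given in \cite{mr}, which is modeled on the Heisenberg computation of Section~\ref{sec:heis}: choose coordinates on $M$ adapted to the bundle structure, apply a Cauchy--Schwarz estimate in the directions in which the metric does not degenerate, and use Proposition~\ref{prop:estimates} together with the bolt produced in Proposition~\ref{prop:bolt} to confine the remaining ``radial'' coordinate. Concretely, let $\gamma\colon[0,\ell)\to M$ be a curve of finite length, parametrized by arclength with $\gamma(0)=p_0$; we must show that $\gamma([0,\ell))$ has compact closure.

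First I would bound the $t$-coordinate from above. Since projecting a curve to the orbit space $(-\infty,\eta]$ does not increase length, and by Proposition~\ref{prop:estimates} $d_g(\cdot,M^t)\to\infty$ as $t\to\eta$ (from any fixed base point, by the triangle inequality), there is a $T_1<\eta$ with $t(\gamma(s))\le T_1$ for all $s$. Thus $\gamma$ stays in the region $N:=\{t\le T_1\}\cup\big(E(2)/SO(2)\big)$, which by Proposition~\ref{prop:bolt} is a smooth $2$-disk bundle over the singular orbit $E(2)/SO(2)\cong\mathbb{R}^2$: in coordinates $(r,\phi,u,v)$, where $r\ge 0$ is the geodesic coordinate with the bolt at $r=0$, $\phi$ is an angular coordinate on the isotropy circle $SO(2)$, and $(u,v)$ are the translation coordinates on $E(2)/SO(2)$, this is $\{0\le r\le R_1\}$. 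The only point used here is that $t$ cannot approach $\eta$, since the end $t=-\infty$ lies in $M$.

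Next, on $N$ the coefficients $a$ and $c$ are bounded below by $q>0$: by Proposition~\ref{prop:solutions} the solution lies in Case~3, so $a,b,c$ are increasing on $(-\infty,\eta)$ with $a,c\to q$ as $t\to-\infty$, whence $a,c\ge q$ throughout. Since $p_2=0$, the vector fields $X_1,X_3$ span the translation subalgebra of $\mathfrak{e}(2)$, and in the left-invariant coframe one has $\sig_2=d\phi$ while $\sig_1^2+\sig_3^2=du^2+dv^2$, the flat metric pulled back from $E(2)/SO(2)$; moreover $u,v$ are $SO(2)$-invariant, so they descend to globally defined smooth functions on all of $N$. Dropping the non-negative $dt^2$ and $\sig_2^2$ terms of \Ref{bianchiAmet} gives, along $N$,
\[
g\ \ge\ a^2\sig_1^2+c^2\sig_3^2\ \ge\ q^2\big(\sig_1^2+\sig_3^2\big)\ =\ q^2\,(du^2+dv^2).
\]
Hence $q^2\big((u\circ\gamma)'^2+(v\circ\gamma)'^2\big)\le|\gamma'|^2=1$, and integrating, the $(u,v)$-displacement of $\gamma$ from $p_0$ is at most $\ell/q$, so $(u,v)\circ\gamma$ stays in a fixed closed ball $\bar B\subset\mathbb{R}^2$.

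Finally, $\gamma$ then remains in $N\cap\{(u,v)\in\bar B\}$, which is the restriction of the $2$-disk bundle $N$ to the compact base $\bar B$, hence compact (the fibre is the $(r,\phi)$-disk, $0\le r\le R_1$ with the $\phi$-circle collapsed at $r=0$). This contradicts $\gamma$ leaving every compact set. The one step requiring genuine care is the coordinate bookkeeping on $E(2)$ in the third paragraph --- isolating the translation coordinates $u,v$, verifying $\sig_1^2+\sig_3^2=du^2+dv^2$, and checking that $u,v$ are well defined globally on $M$ --- which is exactly where the argument of \cite{mr} is invoked; everything else reduces to the displayed estimate together with Propositions~\ref{prop:estimates} and \ref{prop:bolt}.
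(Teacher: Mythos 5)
Your proof is correct and takes essentially the same route as the one the paper relies on (the argument of \cite{mr}, modeled on the escaping-curve computation of Section~\ref{sec:heis}): rule out $t\to\eta$ via Proposition~\ref{prop:estimates}, then use the bolt from Proposition~\ref{prop:bolt} together with the uniform lower bound $a,c\ge q$ and the identity $\sigma_1^2+\sigma_3^2=du^2+dv^2$ to confine the translation coordinates by Cauchy--Schwarz. Nothing further is needed.
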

The proof here is identical to that in \cite{mr}, and will thus be omitted.
This completes the proof of Theorem \ref{thm4}.

\section{Acknowledgements}

The authors thank Robert~Ream for helpful exchanges pertaining to
the Verdiani-Ziller method.

\appendix

\section{Outline of the derivation of the ODE and PDE systems}
\subsection{Generalized PDEs}
Suppose one is given a $4$-manifold with a frame $\kk$, $\tT$, $\xx$, $\yy$ satisfying
the Lie bracket relations \Ref{brack1}-\Ref{brack3} for functions $A$, $B$, $C$, $D$, $E$, $F$, $G$, $H$, $L$, $N$ on the frame domain. The dual coframe
$\hat\kk$, $\hat\tT$, $\hat\xx$, $\hat\yy$ then satisfies
\begin{align}
d\kf&=-N\xf\wedge\yf-L\kf\wedge\tf,\nonumber\\
d\tf&=-N\xf\wedge\yf-L\kf\wedge\tf,\nonumber\\
d\xf&=-A\kf\wedge\xf-C\kf\wedge\yf-E\tf\wedge\xf-G\tf\wedge\yf,\nonumber\\
d\yf&=-B\kf\wedge\xf-D\kf\wedge\yf-F\tf\wedge\xf-H\tf\wedge\yf.\lb{d-frame}
\end{align}
The vanishing of $d^2$ on the coframe $1$-forms gives four equations, two of which
are identical. Writing, for example, $dN=d_\kk N\kf+d_\tT N\tf+d_\xx N\xf+d_\yy N\yf$ etc.
and separating components yields $12$ scalar equations
\begin{align}
d_\xv L&=0,\qquad d_\yv L=0,\nonumber\\
d_\yv A&=d_\xv C,\qquad d_\yv B=d_\xv D,\qquad d_\yv E=d_\xv G,\qquad d_\yv F=d_\xv H,\nonumber\\
d_\tv N&=NE+NH+LN,\qquad d_\kv N=NA+ND-LN,\lb{nl}\\
d_\tv A&=d_\kv E-AL+CF-EL-GB,\lb{dta} \\
d_\tv B&=d_\kv F-BL+BE+DF-FL-FA-HB,\lb{dtb} \\
d_\tv C&=d_\kv G+AG-CL+CH-EC-GL-GD,\lb{dtc} \\
d_\tv D&=d_\kv H+BG-DL-FC-HL.\lb{dtd}
\end{align}
Adding and subtracting the two equations \Ref{nl}, the two equations \Ref{dta} and \Ref{dtd}
and the two equations \Ref{dtb}-\Ref{dtc}, while using relations \Ref{rels1}-\Ref{rels2},
yields six equations of which only five are independent. The resulting equivalent system is
\begin{align}
&d_\xv L=0,\qquad d_\yv L=0,\lb{ll}\\
&d_\yv A=d_\xv C,\qquad d_\yv B=d_\xv D,\qquad d_\yv E=d_\xv G,\qquad d_\yv F=d_\xv H,\lb{foursome}\\
&d_{\kv+\tv} N=0,\qquad d_{\kv-\tv} N=2N^2-2LN,\lb{nll} \\
&d_\tv (F+G)=-d_\kv (B+C)-(F+G)L+(B+C)L-2(F+G)B+2(B+C)F,\lb{long1}\\
&d_\kv (F+G)=d_\tv (B+C)+(B+C)L+(F+G)L+F^2-G^2+B^2-C^2,\lb{long2}\\
&d_\tv (B-C)=d_\kv (F-G)-(B-C)L-(F-G)L-(B+C)^2-(F+G)^2.\lb{long3}
\end{align}
Assume now that $M$ admits a K\"ahler metric making our frame orthonormal,
which is additionally central. Then, in addition to the above system,
we have equation \Ref{cent}, which we now reproduce:

\begin{align}
&-N(2L+C-H+A-F)[-L(2L+C-H+A-F)\nonumber\\
&+d_{\kk-\tT}L-d_\tT(C-H)+d_\kk(A-F)]\nonumber\\
&-d_\xx(L+C-H)d_\yy(L+A-F)+d_\yy(L+C-H)d_\xx(L+A-F)=\lam.\lb{cent1}
\end{align}
Equations \Ref{ll}-\Ref{cent1} constitute our system in the general case.
With the help of \Ref{ll}, equation \Ref{cent1} can be simplified a little
to the form
\begin{align}
&-N(2L+C-H+A-F)[-L(2L+C-H+A-F)\nonumber\\
&+d_{\kk-\tT}L-d_\tT(C-H)+d_\kk(A-F)]\nonumber\\
&-d_\xx(C-H)d_\yy(A-F)+d_\yy(C-H)d_\xx(A-F)=\lam.\lb{cent2}\end{align}

\subsection{The equations in new variables}
Recall our functions $L$, $N$ along with the four given in \Ref{chan-var}
reproduced here.
\begin{align}
P&=(B-C)+(F-G), &&Q=(B-C)-(F-G),\nonumber \\
R&=\sqrt{(B+C)^2+(F+G)^2}, &&S=\tan^{-1}\left(\frac{B+C}{F+G}\right),\lb{chan-var1}
\end{align}
where $S$ is only defined on the set $\{F+G\}\ne 0$.

In terms of these, we have the inverse transformation
\begin{align}
B&=[(P+Q)+2R\sin S]/4,\qquad C=[-(P+Q)+2R\sin S]/4,\nonumber\\
F&=[(P-Q)+2R\cos S]/4,\qquad G=[-(P-Q)+2R\cos S]/4.\lb{change}
\end{align}

We can write the system \Ref{ll}-\Ref{long3}, \Ref{cent2} in these variables as
follows
\begin{align}
&d_\xv L=0,\qquad d_\yv L=0,\lb{ll1}\\
&d_\yy N+d_\yy(R\cos S)=d_\xx(R\sin S)-d_\xx(P+Q)/2\\
&d_\xx N-d_\xx(R\cos S)=d_\yy(R\sin S)+d_\yy(P+Q)/2\\
&-d_\yy N-d_\yy(R\sin S)=d_\xx(R\cos S)-d_\xx(P-Q)/2\\
&-d_\xx N+d_\xx(R\sin S)=d_\yy(R\cos S)+d_\yy(P-Q)/2\\
&d_{\kv+\tv} N=0,\qquad d_{\kv-\tv} N=2N^2-2LN,\lb{nll1} \\
&d_\tv (R\cos S)=-d_\kv (R\sin S)-RL(\cos S-\sin S)\nonumber\\
&+\frac{1}{2}(P-Q)R\sin S-\frac{1}{2}(P+Q)R\cos S,
\lb{trig11}\\
&d_\kv (R\cos S)=d_\tv (R\sin S)+RL(\sin S+\cos S)\nonumber\\
&+\frac{1}{2}(P-Q)R\cos S+\frac{1}{2}(P+Q)R\sin S,
\lb{trig21}\\
&\frac{1}{2}d_\tv (P+Q) = \frac{1}{2}d_\kv (P-Q)-PL-R^2,\lb{longy11}\\
&-N(2L+N-P/2)[-L(2L+N-P/2)+d_{\kk-\tT}L-(d_\tT N/2-d_\tT(P+Q)/4)\nonumber\\
&+(d_\kk N/2-d_\kk(P-Q)/4)]-(d_\yy N/2-d_\yy(P-Q)/4)(d_\xx N/2-d_\xx(P+Q)/4)\nonumber\\
&+(d_\yy N/2-d_\yy(P+Q)/4)(d_\xx N/2-d_\xx(P-Q)/4)=\lam.\lb{cent3}
\end{align}
The verification is as in \cite{mr}, except that for \Ref{cent3}
we used
\be\lb{intermediate1}
A-F=(N-F+G)/2,\qquad C-H=(N-B+C)/2,
\end{equation}
which follows from \Ref{rels1}-\Ref{rels2}.

Of these equations, \Ref{trig11}-\Ref{trig21} can be simplified
as in \cite{mr} to
\begin{align}\lb{R-S1}
d_\tv R &= -R d_\kv S - RL - \frac{1}{2}(P+Q)R,\qquad
d_\kv R = R d_\tv S + RL + \frac{1}{2}(P-Q)R.
\end{align}
Additionally, \Ref{cent3} can be rewritten as
\begin{align}
&-N(2L+N-P/2)[-L(2L+N-P/2)+d_{\kk-\tT}L+d_{\kk-\tT} N/2-d_{\kk-\tT}P/4+d_{\kk+\tT}Q/4]\nonumber\\
&+d_\yy N d_\xx Q/4-d_\xx N d_\yy Q/4+d_\yy Q\, d_\xx P/8-d_\yy P\, d_\xx Q/8
=\lam.\lb{cent4}
\end{align}

At this point our derivation splits into cases.

\subsection{The case where all functions depend on $\ta$}.
Recall that there exists a local function $\ta$ such that
$\n\ta=\kk-\tT$. Since
\[
d_\xx\ta=0,\qquad d_\yy\ta=0,\qquad d_{\kk+\tT}\ta=0,
\]
it follows that if $A,\ldots H,L, N$ are locally compositions of functions of $\ta$,
the equations \Ref{ll}-\Ref{long3}, \Ref{cent2} simplify to \Ref{nll}-\Ref{long3} without the first
equation in \Ref{nll}, together with
\begin{align}
&-N(2L+C-H+A-F)[-L(2L+C-H+A-F)\nonumber\\
&+d_{\kk-\tT}L-d_\tT(C-H)+d_\kk(A-F)]=\lam.\lb{cent6}
\end{align}
In terms of the variables \Ref{chan-var1} this system takes the form
\begin{align*}
&d_{\kv-\tv} N=2N^2-2LN,\\
&d_{\kk-\tT}R=R(P+2L),\qquad  0=-R(d_{\kk-\tT}S+Q),\\
&d_{\kk-\tT}P=2LP+2R^2,\\
&-N(2L+N-P/2)[-L(2L+N-P/2)+d_{\kk-\tT}(L+N/2-P/4)]=\lam,
\end{align*}
where we have used \Ref{R-S1} as well as $d_{\kk}\ta=1$, $d_{\tT}\ta=-1$.
Alternatively, with a prime denoting differentiation with respect to $\ta$,
since $d_{\kk-\tT}\ta=2$, we can write the system as
\begin{align}
&N'=N^2-LN,\nonumber \\
&R'=R(P/2+L),\qquad  0=-R(2S'+Q),\nonumber\\
&P'=LP+R^2,\nonumber\\
&-N(2L+N-P/2)[-L(2L+N-P/2)+(2L'+N'-P'/2)]=\lam.\lb{cent-onevar}
\end{align}

\subsection{The case $N=0$}

When $N=0$, the only equations that become trivial are \Ref{nll1},
but some equations simplify. We only write the resulting system
in the variables \Ref{chan-var1}. We have
\begin{align}
&d_\xv L=0,\qquad d_\yv L=0,\\
&d_\yy(R\cos S)=d_\xx(R\sin S)-d_\xx(P+Q)/2,\\
-&d_\xx(R\cos S)=d_\yy(R\sin S)+d_\yy(P+Q)/2,\\
-&d_\yy(R\sin S)=d_\xx(R\cos S)-d_\xx(P-Q)/2,\\
&d_\xx(R\sin S)=d_\yy(R\cos S)+d_\yy(P-Q)/2,\\
&d_\yy Q\, d_\xx P/8-d_\yy P\, d_\xx Q/8=\lam,\lb{cent7}\\
&d_{\kk-\tT}R=R(d_{\kk+\tT}S+P+2L),\lb{Skpt}\\
&d_{\kk+\tT}R=-R(d_{\kk-\tT}S+Q),\lb{Skmt}\\
&d_{\kk-\tT}P-d_{\kk+\tT}Q=2LP+2R^2,
\end{align}
where we have written the central curvature equation in \Ref{cent7}.

We see that the system decouples, in the sense that the first six equations
involve only $d_\xx$, $d_\yy$ derivatives, while the last three involve only
$d_{\kk\pm\tT}$. For these last three, recall from \cite{mr} that a rotation
in the planes spanned by $\xx$, $\yy$ allows us to dispense with $d_{\kk\pm\tT}S$
in \Ref{Skpt}-\Ref{Skmt}, simplifying the equations further.

\end{document}